\newtheorem{theorem}{Theorem}[section]
\newtheorem{lemma}[theorem]{Lemma}
\newtheorem{proposition}[theorem]{Proposition}
\theoremstyle{definition}
\newtheorem{definition}[theorem]{Definition}
\newcommand{\Ad}{{\rm Ad}\,}
\newcommand{\id}{{\rm id}}
\newcommand{\cB}{{\mathscr B}}
\newcommand{\cE}{{\mathscr E}}
\newcommand{\cH}{{\mathscr H}}
\newcommand{\cK}{{\mathscr K}}
\newcommand{\cL}{{\mathscr L}}
\newcommand{\cU}{{\mathscr U}}
\newcommand{\cJ}{{\mathscr J}}
\newcommand{\cT}{{\mathscr T}}
\newcommand{\cR}{{\mathscr R}}
\newcommand{\Cb}{{\mathbb C}}
\newcommand{\Rb}{{\mathbb R}}
\newcommand{\Nb}{{\mathbb N}}
\newcommand{\oU}{{\mathbf{U}}}
\newcommand{\oV}{{\mathbf{V}}}
\newcommand{\WM}{{\rm WM}}
\newcommand{\eps}{\varepsilon}
\newcommand{\unit}{\mathbf{1}}
\newcommand{\overbar}[1]{\mkern 1.5mu\overline{\mkern-1.5mu#1\mkern-1.5mu}\mkern 1.5mu}
\newcommand{\oZ}{{\mathbf{Z}}}
\newcommand{\G}{G}
\newcommand{\univ}{{\rm{u}}}
\newcommand{\fin}{{\rm fin}}
\DeclareMathOperator{\Irr}{Irr}
\DeclareMathOperator{\Rep}{Rep}
\begin{document}

\title{Quantum groups, property (T), and weak mixing}

\author {Michael Brannan}
\address{Michael Brannan,
Department of Mathematics,
Mailstop 3368, Texas A\&M University, 
College Station, TX 77843-3368, USA}
\email{mbrannan@math.tamu.edu}

\author {David Kerr}
\address{David Kerr,
Department of Mathematics,
Mailstop 3368, Texas A\&M University, 
College Station, TX 77843-3368, USA}
\email{kerr@math.tamu.edu}

\begin{abstract}
For second countable discrete quantum groups, and more generally second countable
locally compact quantum groups with trivial scaling group,
we show that 
property (T) is equivalent to every weakly mixing unitary representation
not having almost invariant vectors. 
This is a generalization of a theorem of Bekka and Valette
from the group setting and was previously established
in the case of low dual by Daws, Skalsi, and Viselter.
Our approach uses spectral techniques and is completely different from 
those of Bekka--Valette and Daws--Skalski--Viselter. By a separate argument we
furthermore extend the result to second countable nonunimodular locally compact quantum groups,
which are shown in particular not to have property (T), generalizing a theorem of Fima
from the discrete setting.
We also obtain quantum group versions of 
characterizations of property (T) of Kerr and Pichot in terms of the Baire category theory
of weak mixing representations and of Connes and Weiss in term of
the prevalence of strongly ergodic actions.
\end{abstract}

\maketitle


\section{Introduction}

Introduced by Kazhdan in the 1960s for the purpose of showing 
that many lattices are finitely generated, Property (T)
has come to play a foundational 
role in the study of rigidity in Lie groups, ergodic theory, and von Neumann algebras through work
of Margulis, Zimmer, Connes, Popa, and others \cite{Zim84,BekHarVal08,Pop07}.
Over the last twenty-five years it has been extended in stages to the realm of 
quantum groups, first via Kac algebras \cite{JoiPet92}, then in the algebraic \cite{BedConTus05}
and discrete \cite{Fim10,KyeSol12} settings, and finally in the general framework of 
locally compact quantum groups as defined by Kusterman and Vaes \cite{DawFimSkaWhi16}.
In one notable recent application, Arano showed in \cite{Ara15, Ara17} that the Drinfeld double of a
$q$-deformation of compact simple lie group has property (T)
and that this implies that the duals of these $q$-deformations 
have a central version of property (T), a fact which has inspired 
progress in the theory of C$^*$-tensor categories and underpins
Popa and Vaes's construction of subfactors with property (T) standard invariant
that do not come from groups \cite{PopVae15,NesYam16}.

By definition, a locally compact group $G$ does not have property (T)
if it admits a unitary representation which does not have a nonzero invariant vector
({\it ergodicity}) but does have a net of unit vectors which is asymptotically invariant
on each group element ({\it having almost invariant vectors}).
Because ergodicity has poor permanence properties, it can be hard to leverage
this definition so as to obtain global information about the representation theory
of a group without property (T), and in particular to determine to what extent
the kind of flexible behaviour exhibited by amenable groups persists
in this more general setting. 
Bekka and Valette provided a remedy for this in the separable case
by showing that one can equivalently replace {\it ergodicity} 
above with {\it weak mixing}, which is characterized by the absence of nonzero finite-dimensional
subrepresentions, or alternatively by the ergodicity of the tensor product
of the representation with its conjugate \cite{BekVal93}. 
This leads for example to a short proof of a theorem
of Wang that characterizes property (T) in terms of the isolation of
finite-dimensional representations in the spectrum (\cite{BekVal93}, Section~4)
and a streamlined proof of the Connes--Weiss characterization of property (T)
in terms of strongly ergodic probability-measure-preserving actions (\cite{BekHarVal08}, Section~6.3).

Using the fact that weak mixing is preserved under tensor products with arbitrary 
representations, Kerr and Pichot applied the Bekka--Valette theorem
to show that if a second countable locally compact group does not have property (T) then
within the set of all unitary representations of the group on a fixed separable infinite-dimensional 
Hilbert space the weakly mixing ones 
form a dense $G_\delta$ in the weak topology \cite{KerPic08}, strengthening a
result of Glasner and Weiss that gave the 
same conclusion for ergodic representations \cite{GlaWei97}.
The idea is that any representation will approximately absorb a representation 
with almost invariant vectors under tensoring (since locally it is as if we were tensoring
with the trival representation) and so such a tensor product will be ``close'' to the
original representation while also inheriting any properties of the second one
that are preserved under tensoring, such as weak mixing.
By a similar principle requiring a more subtle implementation,
Kerr and Pichot
also established an analogous conclusion for the measure-preserving actions of the group
on a fixed standard atomless probability space.

Using the theory of positive-definite functions as in \cite{Jol05,PetPop05}, 
Daws, Skalsi, and Viselter demonstrated in \cite{DawSkaVis16}
that the conclusion of the Bekka--Valette theorem also
holds for second countable discrete unimodular quantum groups 
with low dual, and as an application they derive analogues of the Connes--Weiss theorem
and the representation-theoretic Kerr--Pichot theorem. 
Low dual is the rather restrictive assumption that there is a bound on the 
dimensions of the irreducible representations of the quantum group,
and the authors of \cite{DawSkaVis16} wonder, somewhat pessimistically, whether it can be removed. 
In the present paper we show that the Bekka--Valette and Kerr--Pichot theorems actually hold 
for all second countable discrete quantum groups, and even more generally
for all second countable locally compact quantum groups with trivial scaling group
(Theorems~\ref{T-BV} and \ref{T-KP})
as well as for all second countable nonunimodular locally compact quantum groups
(Theorem~\ref{T-BV nonunimodular}).
The methods of Daws, Skalsi, and Viselter can then also be applied 
to extend their version of the Connes--Weiss theorem to 
all second countable locally compact quantum groups with 
trivial scaling group (Theorem~\ref{T-CW}).

Our approach is completely different from those of Daws--Skalsi--Viselter
and Bekka--Valette and consists in applying the quantum group version of 
Wang's characterization of property (T) mentioned above in order to
reduce the problem to a purely spectral question concerning C$^*$-algebras. 
In Theorems~\ref{T-dense} and \ref{T-closed} we prove
that the following hold for a separable unital C$^*$-algebra $A$ and 
a fixed separable infinite-dimensional Hilbert space $\cH$:
\begin{enumerate}
\item if the spectrum of $A$ contains
no isolated finite-dimensional representations then the set of weakly mixing unital
representations of $A$ on $\cH$ is a dense $G_\delta$, and

\item if the set of finite-dimensional representations in the spectrum of $A$
is nonempty and contains only isolated points then the set of weakly mixing unital
representations of $A$ on $\cH$ is closed and nowhere dense.
\end{enumerate}
A version of the argument establishing (i) for unitary representations of countable discrete groups
has also been included in the book \cite{KerLi16} by Li and the second author. 
Theorems~\ref{T-BV} and \ref{T-KP} then follow from (i) and (ii) whenever Wang's
characterization of property (T) holds in the quantum group context, 
and this is known to be the case when the scaling group is trivial
(see Section~\ref{S-wm repr}). By a completely different argument we 
also prove in Theorem~\ref{T-BV nonunimodular} that the conclusions Theorems~\ref{T-BV} and \ref{T-KP}
are valid for second countable nonunimodular locally compact quantum groups,
which we show in particular not to have property (T), generalizing a result of Fima
from the discrete case \cite{Fim10}.

We begin in Section~\ref{S-preliminaries} by reviewing some of the
basic theory of locally compact quantum groups and their unitary representations
as developed by Kustermans and Vaes \cite{KusVae00,KusVae03,Vae01,Kus01}.
In Section~\ref{S-wm C} we study weak mixing for C$^*$-algebra representations 
and establish the two key spectral results (i) and (ii)
concerning separable unital C$^*$-algebras.
In Section~\ref{S-wm repr} we discuss weak mixing and property (T) for quantum groups,
record the quantum group incarnation of Wang's theorem, and then establish 
our versions of the Bekka--Valette and Kerr--Pichot theorems. 
Section~\ref{S-wm actions} contains the Connes--Weiss-type dynamical characterization
of property (T).
Finally, the nonunimodular case is treated in Section~\ref{S-nonunimodular}.
\medskip

\noindent{\it Acknowledgements.}
M.B. was partially supported by NSF grant DMS-1700267. D.K. was partially supported by NSF grant DMS-1500593.

\section{Preliminaries}\label{S-preliminaries}

For a C$^*$-algebra $A$ we write $M(A)$ for its multiplier algebra. 
A representation of $A$ is understood to mean a $^*$-homomorphism 
from $A$ into the C$^*$-algebra of bounded linear operators on some Hilbert space.
When working with tensor products of Hilbert spaces $\cH$ and $\cK$, 
we denote by $\Sigma$ the tensor flip map from $\cH \otimes \cK$ to $\cK \otimes \cH$.    
For linear operators on multiple tensor products, we use leg notation. 
For example, if $U$ is a unitary operator on 
a Hilbert space tensor product $\cH \otimes \cK$ we write $U_{13}$ 
for the unitary operator on a Hilbert space tensor product of the form $\cH \otimes \cJ \otimes \cK$
which is given by $V(U\otimes\id )V^{-1}$ where $V$ is the shuffle map
$\cH \otimes \cK \otimes \cJ \to \cH \otimes \cJ \otimes \cK$
defined on elementary tensors by $\xi\otimes\zeta\otimes\kappa \mapsto \xi\otimes\kappa\otimes\zeta$,
i.e., $V = \id_\cH \otimes\Sigma$.

\subsection{Locally compact quantum groups}

Our main references for generalities on locally compact quantum groups 
are \cite{KusVae00, KusVae03, Vae01}. Formally speaking, a (von Neumann algebraic) 
locally compact quantum group is a von Neumann algebra
with coassociative coproduct and left and right Haar weights,
but as usual we use the simple notation $\G$ so that we can conveniently 
and suggestively refer to the various objects that are canonically attached 
to it just as one does for locally compact groups, 
although there is no longer
anything like an underlying group. The von Neumann algebra itself
is thus written $L^\infty (\G )$, and the coproduct is a unital normal
$^*$-homomorphism $\Delta : L^\infty (\G )\to L^\infty (\G )\overbar{\otimes}L^\infty (\G )$
satisfying the coassociativity condition
\begin{align*}
(\Delta\otimes\id )\Delta = (\id\otimes\Delta )\Delta .
\end{align*}
The left and right Haar weights are normal semifinite weights $\varphi$ and $\psi$
on $L^\infty (\G )$ such that for every $\omega\in L^\infty (\G )_*^+$ one has
\begin{align*}
\varphi ((\omega\otimes\id )\Delta (a)) = \varphi (a) \omega (1)
\end{align*}
for all $a\in L^\infty (\G )^+$ with $\varphi (a) < \infty$ and
\begin{align*}
\psi ((\id\otimes\omega )\Delta (a)) = \psi (a) \omega (1)
\end{align*}
for all $a\in L^\infty (\G )^+$ with $\psi (a) < \infty$.  
The predual of $L^\infty(\G)$ is written as $L^1(\G)$, and becomes a completely contractive 
Banach algebra with respect to the {\it convolution product} 
\[
\omega_1 \star \omega_2 = (\omega_1 \otimes \omega_2) \circ \Delta , \qquad \omega_1, \omega_2 \in L^1(\G).
\]

Associated to $\G$ is a canonical weakly dense sub-C$^*$-algebra of $L^\infty (\G )$,
written $C_0 (\G )$, which plays the role of the C$^*$-algebra of continuous functions vanishing 
at infinity in the case of ordinary groups.  
We say that $\G$ is {\it second countable} if $C_0 (\G )$ is separable.  
The coproduct restricts to a unital $^*$-homomorphism $\Delta: C_0(\G) \to M(C_0(\G) \otimes C_0(\G))$. 
The algebras $C_0 (\G )$ and $L^\infty (\G )$
are standardly represented on the GNS Hilbert space $L^2 (\G )$ associated to the left Haar weight.
In the case of a locally compact group, the notations $L^\infty (\G )$, $L^1(\G)$, $C_0 (\G )$,
and $L^2 (\G )$ have their ordinary meaning.

There is a (left) {\it fundamental unitary operator} $W$ on $L^2(\G)\otimes L^2(\G)$ which
satisfies the {\it pentagonal relation} $W_{12} W_{13} W_{23} = W_{23} W_{12}$
and unitarily implements the coproduct $\Delta$ on $L^\infty(\G)$ via the formula 
$\Delta(x) = W^*(1\otimes x)W$.  Using $W$ one has 
$C_0(\G) =  \overline{\{(\id \otimes \omega)W: \omega \in \cB(L^2(\G))_*\}}^{\|\cdot\|}$, 
and one can define the {\it antipode} of $\G$ as the (generally only densely defined) 
linear operator $S$ on $C_0 (\G)$ (or $L^\infty(\G)$) satisfying the identity $(S \otimes \id )W = W^*$.   
The antipode admits a polar decomposition $S = R\circ\tau_{-i/2}$ where $R$ is an antiautomorphism 
of $L^\infty (\G )$
(the {\it unitary antipode}) and $\{ \tau_t \}_{t\in\Rb}$ is a one-parameter
group of automorphisms (the {\it scaling group}).
In the case of a locally compact group, the scaling group is trivial
and the antipode is the antiautomorphism sending a function $f \in C_0(G)$ to the function $s\mapsto f(s^{-1} )$.  
Using the antipode $S$ one can endow the convolution algebra $L^1(\G)$ with a densely defined 
involution by considering the norm-dense subalgebra $L^1_\sharp (\G)$ of $L^1(\G)$ consisting
of all  $\omega\in L^1(\G)$ for which there exists an $\omega^\sharp \in L^1(\G)$ with
$\langle \omega^\sharp, x\rangle = \overline{ \langle \omega, S(x)^* \rangle}$ for each $x\in \mathcal{D}(S)$.
It is known from  \cite{Kus01} and Section~2 of \cite{KusVae03} that $L^1_\sharp(\G)$ 
is an involutive Banach algebra with involution  $\omega\mapsto\omega^\sharp$  
and norm $\|\omega\|_\sharp = \mbox{max}\{\|\omega\|, \|\omega^\sharp\|\}$.  

Associated to any locally compact quantum group $\G$ is its 
{\it dual locally compact quantum group} $\widehat{\G}$, whose associated algebras, 
coproduct, and fundamental unitary are given by 
$C_0(\widehat \G) = 
\overline{\{(\omega \otimes \id)W: \omega \in \cB(L^2(\G))_*\}}^{\|\cdot\|} \subseteq \cB (L^2(G))$, 
$L^\infty(\widehat{\G}) = C_0(\widehat{\G})''$, $\hat \Delta (x) = \hat{W}^*(1\otimes x)\hat W$, 
and $\hat W = \Sigma W^* \Sigma$.  
Then in fact $W \in M(C_0(\G) \otimes C_0(\widehat{\G}))$,
and the {\it Pontryagin duality theorem} asserts that  
the bidual quantum group $\widehat{\widehat{\G}}$ is canonically identified with the original 
quantum group $G$.  One says that  a locally compact quantum group $\G$ 
is {\it compact} if $C_0 (\G )$ is unital, and {\it discrete} if $\widehat{\G}$ is compact, 
which is equivalent to $C_0 (\G )$ being a direct sum of matrix algebras. 

For a locally compact quantum group $\G$, we can always assume that the left and 
right Haar weights are related by $\psi = \varphi \circ R$, where $R$ is the unitary antipode.  
If the left and right Haar weights $\varphi$ and $\psi$ of $\G$ coincide then we say 
that $\G$ is {\it unimodular}. In general, the failure of $\psi$ to be left-invariant 
is measured by the {\it modular element}, which is a strictly positive element $\delta$ 
affiliated with $L^\infty(\G)$ satisfying the identities  $\Delta(\delta) = \delta \otimes \delta$ 
and $\psi(\cdot) = \varphi(\delta^{1/2}\cdot \delta^{1/2})$.    
Compact quantum groups are always unimodular, and the corresponding Haar weight can always 
be chosen to be a state. Although discrete groups are always unimodular, 
discrete quantum groups need not be. We recall that a discrete quantum group $\G$ 
is said to be of {\it Kac type} (or a {\it Kac algebra}) if it is unimodular, 
which is equivalent to the Haar state on $\widehat{G}$ being a trace.

\subsection{Unitary representations}

\begin{definition}
A {\it unitary representation} of a locally compact quantum group $\G$ on a Hilbert space $\cH$
is a unitary $U\in M(C_0 (\G ) \otimes \cK (\cH )) \subseteq 
\cB (L^2 (\G ) \otimes\cH )$ such that $(\Delta\otimes\id )(U) = U_{13} U_{23}$.
\end{definition}

In the above definition one can replace $M(C_0 (\G ) \otimes \cK (\cH ))$ 
with the larger algebra $L^\infty (\G ) \overbar{\otimes} \cB (\cH )$, for if $U$
is a unitary in the latter which satisfies $(\Delta\otimes\id )(U) = U_{13} U_{23}$ 
then $U$ automatically belongs to the former
(see for example Theorem~4.12 of \cite{BraDawSam13}).

Associated to a unitary representation $U\in M(C_0 (\G ) \otimes \cK (\cH ))$ 
is an adjointable operator on the Hilbert module $C_0 (\G )\otimes\cH$ which we write 
using the boldface version $\oU$ of the symbol in question. 
The relation between $U$ and $\oU$ is given by
\begin{align*}
\langle \oU (a\otimes\xi) , b\otimes\zeta \rangle = b^* (\id\otimes\omega_{\xi ,\zeta} )(U)a
\end{align*}
for all $a,b\in C_0 (\G )$ and $\xi ,\zeta\in\cH$, where $\omega_{\xi, \zeta}$ is
the vector functional $x\mapsto \langle x\xi , \zeta \rangle$.

Associated to $\G$ are two distinguished unitary representations,
the one-dimensional {\it trivial representation} $1_G \in M(C_0(\G))$ 
given by the unit of $L^\infty(\G)$, and the {\it left regular representation} 
given by the fundamental unitary $W \in M(C_0(\G) \otimes C_0(\widehat{\G}))$.

Two unitary representations $U\in M(C_0 (\G ) \otimes \cK (\cH_1 ))$
and $V\in M(C_0 (\G ) \otimes \cK (\cH_2 ))$ of $\G$ are {\it (unitarily) equivalent}
if there is a unitary isomorphism $u : \cH_1 \to\cH_2$ such that $V = (\id\otimes\Ad u)(U)$.  
A {\it subrepresentation} of a unitary representation $U\in M(C_0 (\G ) \otimes \cK (\cH )) $
is a unitary representation of the form 
$Q = (1\otimes P)U(1\otimes P)\in L^\infty (\G ) \overbar{\otimes} \cB (\cH_0 )$
where $\cH_0$ is a closed subspace of $\cH$, $P$ is the orthogonal projection of $\cH$ onto $\cH_0$,
and $1\otimes P$ commutes with $U$. In this case, we write $Q \le U$.  
The subrepresentation is said to be {\it finite-dimensional}
if $\cH_0$ is finite-dimensional.

Let $\G$ be a unimodular locally compact quantum group. Let 
$U\in M(C_0 (\G ) \otimes \cK (\cH )) $ be a unitary representation of $\G$.
Write $\overbar{\cH}$ for the conjugate of $\cH$, i.e., the Hilbert space which is
the same as $\cH$ as an additive group but with the scalar multiplication
$(c,\xi )\mapsto \bar{c}\xi$ for $c\in\Cb$ and inner product 
$\langle\xi,\zeta\rangle_{\overbar{\cH}} = \langle \zeta,\xi\rangle_{\cH}$.
Letting $T : \cB (\cH ) \to \cB (\overbar{\cH} )$ be the transpose map
$T(a)(\overbar{\xi} ) = \overline{a^* (\xi )}$, we define
the {\it conjugate} of $U$, written $\overbar{U}$, to be the unitary representation
\[
(R\otimes T)(U)\in M(C_0 (\G ) \otimes \cK (\overbar{\cH} ))  ) .
\]

The {\it tensor product} of two unitary representations $U\in M(C_0 (\G ) \otimes \cK (\cH )) $
and $V\in M(C_0 (\G ) \otimes \cK (\cK )) $ is the unitary representation 
\[
U \odot V :=U_{12} V_{13} \in M(C_0 (\G ) \otimes \cK (\cH \otimes \cK)) 
\subseteq  L^\infty (\G )\overbar{\otimes} \cB (\cH ) \overbar{\otimes} \cB (\cK ). 
\] 

There is a bijective correspondence between unitary representations 
$U\in M(C_0 (\G ) \otimes \cK (\cH ))$ and nondegenerate $^*$-representations 
$\pi_U:L^1_\sharp(\G) \to \cB(H)$ (\cite{Kus01}, Corollary~2.13).  
This correspondence is given by 
\[
\pi_U(\omega)  = (\omega \otimes \id) U \in  \cB(H) , \qquad \omega \in L^1_\sharp(\G).
\]
At the level of $^*$-representations of $L^1_\sharp(\G)$, 
the trivial representation $1_G$ corresponds to the $^*$-character 
$\omega \mapsto \omega(1)$, and the left regular representation is written as 
$\omega \mapsto \lambda(\omega)  = (\omega \otimes \id)W \in C_0(\widehat{\G}) \subseteq \cB(L^2(\G))$.  
As expected, we have the dual relations 
\[
C_0(\widehat \G) = \overline{\lambda(L^1_\sharp(\G))}^{\|\cdot \|} \quad\text{and}\quad 
C_0(\G) = \overline{\hat \lambda(L^1_\sharp(\widehat\G))}^{\|\cdot \|}
\] 
where $\hat \lambda$ is the left regular representation of $\widehat \G$.  

Let $C_0^{\univ} (\widehat \G )$ denote the universal enveloping C$^*$-algebra of $L^1_\sharp(\G)$.  
This is a universal version of $C_0(\widehat \G)$ which encodes the unitary representation 
theory of $\G$ (since its nondegenerate representations 
are in bijective correspondence with the nondegenerate $^*$-representations of $L^1_\sharp(\G)$
as bounded Hilbert space operators). 
In particular, the left regular representation $W$ gives rise to a surjective representation
$\lambda: C_0^{\univ} (\widehat \G ) \to  C_0 (\widehat \G ) \subseteq \cB (L^2 (G))$, 
and the trivial representation gives rise to the (dual) {\it counit} 
$\hat \eps_\univ : C_0^{\univ} (\widehat \G ) \to\Cb$.  
We will generally use the same symbols to denote $^*$-representations of $L^1_\sharp(\G)$ 
and their unique extensions to $C_0^{\univ} (\widehat\G )$.  

As was shown in \cite{Kus01}, $C^u_0(\widehat\G)$ admits a coproduct
$\hat \Delta_u:C^u_0(\widehat \G) \to M(C^u_0(\widehat\G) \otimes C^u_0(\widehat\G))$ 
which can be used to turn $(C^u_0(\widehat \G), \hat \Delta_u)$ into a 
{\it universal C$^*$-algebraic locally compact quantum group}.  
For our purposes, we only need the fact that $\hat \Delta_u$ allows one to express 
the tensor product $U \odot V$ of two unitary representations 
$U\in M(C_0 (\G ) \otimes \cK (\cH )) $ and $V\in M(C_0 (\G ) \otimes \cK (\cK )) $ 
in terms of the representation 
$(\pi_U \otimes \pi_V) \circ \sigma \hat \Delta_u: C_0^{\univ} (\widehat \G ) \to \cB(\cH \otimes \cK)$, 
where $\sigma$ denotes the tensor flip map on $C_0^{\univ} (\widehat \G ) \otimes C_0^{\univ} (\widehat \G )$.  

Finally, note that at the level of representations of  
$C_0^{\univ} (\widehat \G )$ (or of $^*$-representations of $L^1_\sharp(\G)$)
the notions of subrepresentation and unitary equivalence of unitary representations reduce 
to their standard meanings.  Indeed, given a unitary representation 
$U\in M(C_0 (\G ) \otimes \cK (\cH))$ and a projection $P$ in $\cB (\cH )$,  
the projection $1\otimes P$ commutes with $U$ if and only if $P$ commutes with 
$\pi_U (C_0^{\univ} (\widehat{\G} ))$,
in which case the representation $\pi_Q : C_0^{\univ} (\widehat{\G} ) \to \cB (P\cH )$
associated to $Q = (1\otimes P)U(1\otimes P)$ is given by $a\mapsto P\pi _U (a)P$.
Similarly, if $V\in M(C_0 (\G ) \otimes \cK (\cK ))$ is another unitary representation, 
then a unitary isomorphism $u : \cH \to\cK$ implements an equivalence 
between $U$ and $V$ if and only if it implements a unitary equivalence between $\pi_U$ and $\pi_V$  
in the sense that $\pi_V = \Ad u \circ \pi _U$.

\section{Weak mixing and representations of C$^*$-algebras}\label{S-wm C}

This section is purely C$^*$-algebraic and aims to establish two results concerning the 
prevalence of weak mixing among unital representations of a separable unital C$^*$-algebra
on a fixed Hilbert space (Theorems~\ref{T-dense} and \ref{T-closed}).

Throughout this section $A$ will denote a separable unital C$^*$-algebra.  
For a fixed Hilbert space $\cH$, the set of all unital representations of $A$ on $\cH$ 
will be written $\Rep (A,\cH)$.
We equip $\Rep (A,\cH)$ with the point-strong operator topology, which is equivalent to the point-weak
operator topology, and also to the point-$^*$-strong operator topology since
the strong and $^*$-strong operator topologies agree on the unitary group of $\cB (\cH )$
and $A$ is linearly spanned by its unitaries.

Points in the spectrum $\widehat{A}$, while formally defined as equivalence classes
of irreducible representations, will be thought of as actual representations via their
representatives, following convention. The set of finite-dimensional representations
in $\widehat{A}$ will be written $\widehat{A}_\fin$.

We begin with a discussion of weak mixing for unital representations of unital C$^*$-algebras.

\begin{definition}\label{D-wm algebra}
We say that a unital representation of $A$ on a Hilbert space is 
{\it weakly mixing} if it has no nonzero finite-dimensional subrepresentations.
\end{definition}

Recall that weak mixing for a unitary representation $\pi : G\to\cU (\cH )$ of a group
can be expressed in either of the following equivalent ways (see Theorem~2.23 in \cite{KerLi16},
and note that the countability assumption there is not needed):
\begin{enumerate}
\item for every finite set $\Omega\subseteq\cH$
and $\eps > 0$, there exists an $s\in G$ such that $|\langle \pi (s) \xi , \zeta \rangle | < \eps$
for all $\xi ,\zeta\in\Omega$,

\item $\pi$ has no nonzero finite-dimensional subrepresentations.  
\end{enumerate}
Since a unital C$^*$-algebra is linearly spanned by its unitaries, from (ii) we immediately obtain the following,
justifying the terminology of Definition~\ref{D-wm algebra}.

\begin{proposition}\label{P-unitary}
A unital representation of $A$ is weakly mixing if and only if its restriction
to the unitary group of $A$ is weakly mixing.
\end{proposition}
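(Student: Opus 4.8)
The plan is to derive Proposition~\ref{P-unitary} directly from the characterization of weak mixing recalled just above, namely the equivalence of conditions (i) and (ii) for unitary representations of a group, applied here to the group $\cU(A)$ of unitaries of $A$. The key observation is that Definition~\ref{D-wm algebra} uses condition (ii) phrased for the C$^*$-algebra $A$: absence of nonzero finite-dimensional subrepresentations of $A$ on $\cH$. So what must be checked is that a closed subspace $\cH_0 \subseteq \cH$ is invariant under a representation $\pi \in \Rep(A,\cH)$ if and only if it is invariant under the restricted unitary representation $\pi|_{\cU(A)}$, and that in each case the restriction of $\pi$ to $\cH_0$ is a (unital) representation of $A$, respectively a unitary representation of $\cU(A)$. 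This is immediate: if $P$ is the orthogonal projection onto $\cH_0$, then $P$ commutes with $\pi(u)$ for every unitary $u \in A$ precisely when $P$ commutes with $\pi(A)$, since $A$ is linearly spanned by its unitaries. Hence $\pi$ has a nonzero finite-dimensional subrepresentation as a representation of $A$ if and only if $\pi|_{\cU(A)}$ has one as a unitary representation of $\cU(A)$.

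With this equivalence of invariant-subspace structures in hand, the proposition follows formally. First I would note that $\pi$ weakly mixing (in the sense of Definition~\ref{D-wm algebra}) means $\pi$ has no nonzero finite-dimensional subrepresentation of $A$, which by the previous paragraph is equivalent to $\pi|_{\cU(A)}$ having no nonzero finite-dimensional subrepresentation of $\cU(A)$, which is exactly condition (ii) above for the unitary representation $\pi|_{\cU(A)}$ of the group $\cU(A)$. By the stated equivalence of (i) and (ii) (Theorem~2.23 of \cite{KerLi16}, with no countability assumption needed), this is equivalent to $\pi|_{\cU(A)}$ being weakly mixing in the sense of (i), hence weakly mixing as a unitary representation of the group $\cU(A)$. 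That is precisely the assertion of the proposition.

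Honestly there is no real obstacle here — the content is entirely in recognizing that "spanned by unitaries" transfers the subrepresentation lattice between $A$ and $\cU(A)$, and then quoting the group-theoretic dichotomy. The one point that deserves a sentence of care is that $\cU(A)$ is being treated as an abstract (discrete) group with no topology, so that the cited equivalence of (i) and (ii) applies verbatim; the parenthetical remark in the excerpt that the countability assumption in Theorem~2.23 of \cite{KerLi16} is not needed is exactly what licenses this, since $\cU(A)$ need not be countable. I would therefore keep the proof to two or three lines: invoke that $A = \spn\,\cU(A)$ to identify invariant subspaces, conclude that finite-dimensional subrepresentations of $\pi$ over $A$ coincide with those of $\pi|_{\cU(A)}$ over $\cU(A)$, and cite the equivalence (i)$\Leftrightarrow$(ii) to finish.
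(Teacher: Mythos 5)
Your argument is correct and is exactly the paper's: the paper derives the proposition immediately from the equivalence (i)$\Leftrightarrow$(ii) for unitary group representations together with the fact that $A$ is linearly spanned by its unitaries, so that invariant subspaces (hence finite-dimensional subrepresentations) of $\pi$ and of $\pi|_{\mathcal U(A)}$ coincide. Your extra remarks about treating $\mathcal U(A)$ as an abstract group and not needing countability match the paper's parenthetical comment, so there is nothing to change.
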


Next we consider a C$^*$-algebra version of Zimmer's notion \cite{Zim84} of weak containment 
for unitary representations of groups. 

\begin{definition}\label{D-prec}
Let $\pi : A\to\cB (\cH )$ and $\rho : A\to\cB (\cK )$ be unital representations. 
We write $\pi\prec\rho$ if for every finite set $\Omega\subseteq A$,
orthonormal set $\{ \xi_1 , \dots , \xi_n \} \subseteq\cH$, and $\eps > 0$ there
is an orthonormal set $\{ \zeta_1 , \dots , \zeta_n \} \subseteq\cK$ such that 
$|\langle \pi (a)\xi_i , \xi_i \rangle - \langle \rho (a)\zeta_i , \zeta_i \rangle | < \eps$
for all $a\in\Omega$ and $i=1,\dots , n$.
\end{definition}

This is the same as the usual notion of weak containment when the representation $\pi$
is irreducible, but is different in general. In fact $\pi$ is weakly contained in $\rho$
if and only if $\pi\prec\rho^{\oplus\Nb}$.

The following is a straightforward consequence of Definition~\ref{D-prec}. 
The version for unitary group representations
was noted in the remark after Proposition~H.2 in \cite{Kec10}.

\begin{lemma}\label{L-prec}
Let $\pi : A\to\cB (\cH )$ and $\rho : A\to\cB (\cK )$ be unital representations 
on separable infinite-dimensional Hilbert spaces.
Then $\pi\prec\rho$ if and only if $\pi\in\overline{\{ \kappa\in\Rep (A,\cH ) : \kappa\cong\rho \}}$.
\end{lemma}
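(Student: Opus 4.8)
The plan is to prove the biconditional $\pi\prec\rho \iff \pi\in\overline{\{\kappa\in\Rep(A,\cH):\kappa\cong\rho\}}$ by translating each side into a statement about matrix coefficients and exploiting that the ambient Hilbert spaces are separable and infinite-dimensional. Since the point-strong topology on $\Rep(A,\cH)$ is metrizable (as $A$ is separable and $\cH$ is separable), closure can be described sequentially, and a basic neighborhood of $\pi$ is determined by a finite set $\Omega\subseteq A$, finitely many vectors, and an $\eps>0$. The key observation is that, because $A$ is linearly spanned by its unitaries and the point-strong and point-$^*$-strong topologies agree, two representations are close in this topology precisely when their matrix coefficients against a fixed finite orthonormal family are uniformly close on $\Omega$.

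For the reverse direction, suppose $\pi\in\overline{\{\kappa\cong\rho\}}$. Given $\Omega$, an orthonormal set $\{\xi_1,\dots,\xi_n\}\subseteq\cH$, and $\eps>0$, choose $\kappa\cong\rho$ with $\|\kappa(a)\xi_i-\pi(a)\xi_i\|$ small for all $a\in\Omega\cup\Omega^*\cup\{1\}$ and all $i$; then $|\langle\pi(a)\xi_i,\xi_i\rangle-\langle\kappa(a)\xi_i,\xi_i\rangle|<\eps$. Transporting the $\xi_i$ through the unitary implementing $\kappa\cong\rho$ yields an orthonormal set $\{\zeta_1,\dots,\zeta_n\}\subseteq\cK$ with $\langle\kappa(a)\xi_i,\xi_i\rangle=\langle\rho(a)\zeta_i,\zeta_i\rangle$, giving $\pi\prec\rho$.

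For the forward direction, assume $\pi\prec\rho$ and fix a basic neighborhood of $\pi$ determined by $\Omega\subseteq A$ (which we may enlarge to contain $1$ and be self-adjoint in the sense of being closed under adjoints), vectors $\eta_1,\dots,\eta_m\in\cH$, and $\eps>0$; I must produce $\kappa\cong\rho$ in this neighborhood. Applying $\pi\prec\rho$ to a suitable finite orthonormal set spanning (approximately) the $\eta_j$ and to a sufficiently large finite $\Omega'\supseteq\Omega$ of products of elements of $\Omega$, I obtain a finite-dimensional partial isometry from a finite-dimensional subspace of $\cH$ into $\cK$ that nearly intertwines $\pi$ and $\rho$ on $\Omega$. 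Extend this partial isometry to a unitary $u:\cH\to\cK$ (possible since both spaces are separable infinite-dimensional) and set $\kappa=\Ad u^{-1}\circ\rho\circ(\text{nothing})$—more precisely $\kappa(a)=u^*\rho(a)u$, which is unitarily equivalent to $\rho$. The matrix-coefficient estimates then show $\kappa$ lies in the prescribed neighborhood of $\pi$; letting the neighborhood shrink gives $\pi\in\overline{\{\kappa\cong\rho\}}$.

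The main obstacle is the forward direction's bookkeeping: the condition in Definition~\ref{D-prec} only controls \emph{diagonal} matrix coefficients $\langle\pi(a)\xi_i,\xi_i\rangle$, whereas point-strong convergence requires controlling $\|\kappa(a)\eta_j-\pi(a)\eta_j\|^2=\langle\kappa(a^*a)\eta_j,\eta_j\rangle-2\re\langle\pi(a)\eta_j,\kappa(a)\eta_j\rangle+\cdots$, and the cross term $\langle\pi(a)\eta_j,\kappa(a)\eta_j\rangle$ is not diagonal. The standard trick is to apply the diagonal estimate to an enlarged orthonormal set and to products such as $a^*a$, then use a polarization/Cauchy–Schwarz argument together with the fact that once $\langle\kappa(b)\xi_i,\xi_i\rangle\approx\langle\pi(b)\xi_i,\xi_i\rangle$ for all $b$ in a multiplicatively large $\Omega'$, one can realize $\kappa$ as $\Ad u$ for a unitary $u$ that approximately fixes each $\xi_i$, forcing the off-diagonal terms into line as well. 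Handling this approximation cleanly—essentially an $\eps$-management argument showing that near-agreement of diagonal coefficients on a large set upgrades to near-agreement of the representations in the point-strong topology after conjugating by an appropriate unitary—is where the real work lies; everything else is routine.
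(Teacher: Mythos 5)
Your reverse implication is correct and is the easy half: choosing $\kappa\cong\rho$ point-strongly close to $\pi$ at the $\xi_i$ and transporting through the implementing unitary does give the required orthonormal family in $\cK$.

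The forward implication, however, has a genuine gap, and it sits exactly where you defer to ``the standard trick'' and ``$\eps$-management.'' Definition~\ref{D-prec} controls only the \emph{diagonal} coefficients $\langle\rho(a)\zeta_i,\zeta_i\rangle$; it gives no information whatsoever about $\langle\rho(a)\zeta_i,\zeta_j\rangle$ for $i\neq j$, and these cross terms are precisely what you need in order for $\xi_i\mapsto\zeta_i$ to be a near-intertwiner: expanding
$\bigl\|\rho(a)\zeta_i-\sum_k\langle\pi(a)\xi_i,\xi_k\rangle\zeta_k\bigr\|^2$
produces the terms $\langle\rho(a)\zeta_i,\zeta_k\rangle$, which the hypothesis does not touch. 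Enlarging $\Omega$ to contain $1$, adjoints, and products such as $a^*a$ does not help, because the missing data concerns pairs of \emph{vectors}, not which algebra elements are tested. The polarization idea also does not go through: applying Definition~\ref{D-prec} to the rotated orthonormal set $\{(\xi_1\pm\xi_2)/\sqrt2,\xi_3,\dots\}$ produces a \emph{new} orthonormal family in $\cK$ bearing no relation to the one obtained from $\{\xi_1,\dots,\xi_n\}$, so the symmetrized cross coefficients you recover cannot be combined with the diagonal ones; attempting to rotate back reintroduces uncontrolled cross terms of the new family. Thus your claim that one obtains ``a finite-dimensional partial isometry that nearly intertwines $\pi$ and $\rho$'' is not justified by the hypothesis, and the concluding paragraph amounts to an acknowledgement that the key step is unproved. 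A genuine additional idea is required here: for example, one can show that the diagonal condition together with orthonormality of the $\zeta_i$ forces the rank inequalities $\mathrm{rank}\,\pi(a)\le\mathrm{rank}\,\rho(a)$ for all $a\in A$ (sum the diagonal estimates against suitable functions of $a^*a$) and then invoke Hadwin's characterization of point-strong closures of unitary orbits of representations of separable C$^*$-algebras, or one must supply some other substantive argument (the paper itself offers none, citing only the remark after Proposition~H.2 of \cite{Kec10} for the group case). As written, your proposal establishes only the easy direction.
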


Denote by $\WM (A,\cH ) \subseteq \Rep (A,\cH )$ the subcollection of all weakly mixing representations.

\begin{lemma}\label{L-G delta}
Let $\cH$ be a separable Hilbert space.
Then $\WM (A,\cH )$ is a $G_\delta$ in $\Rep (A,\cH )$.
\end{lemma}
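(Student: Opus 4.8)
The plan is to express weak mixing as a countable intersection of open conditions in $\Rep(A,\cH)$. Recall from Proposition~\ref{P-unitary} that a unital representation $\pi\in\Rep(A,\cH)$ is weakly mixing if and only if its restriction to the unitary group $\cU(A)$ is weakly mixing, and by the group-theoretic characterization recalled after Definition~\ref{D-wm algebra} this is equivalent to: for every finite set $\Omega\subseteq\cH$ and every $\eps>0$ there exists a unitary $u\in\cU(A)$ with $|\langle\pi(u)\xi,\zeta\rangle|<\eps$ for all $\xi,\zeta\in\Omega$. Since $\cH$ is separable, fix a countable dense subset $\{\xi_k\}_{k\in\Nb}$ of the unit ball of $\cH$. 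A standard approximation argument shows that in the above characterization it suffices to quantify over finite subsets $\Omega$ drawn from $\{\xi_k\}$ and over $\eps$ of the form $1/m$; here one uses that the map $\pi\mapsto\langle\pi(u)\xi,\zeta\rangle$ is continuous in all variables jointly (in the point-weak operator topology on $\Rep(A,\cH)$, the norm topology on $\cU(A)$, and the norm topology on $\cH$), so small perturbations of the vectors only perturb the inner products slightly. Thus
\[
\WM(A,\cH)=\bigcap_{F,m}\ \bigcup_{u\in\cU(A)}\ \bigl\{\pi\in\Rep(A,\cH):|\langle\pi(u)\xi,\zeta\rangle|<1/m\text{ for all }\xi,\zeta\in F\bigr\},
\]
where $F$ ranges over the (countably many) finite subsets of $\{\xi_k\}$ and $m$ over $\Nb$.

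For fixed $F$, $m$, and $u$, the set $\{\pi:|\langle\pi(u)\xi,\zeta\rangle|<1/m\ \forall\,\xi,\zeta\in F\}$ is a finite intersection of sets of the form $\{\pi:|\langle\pi(u)\xi,\zeta\rangle|<1/m\}$, each of which is open because $\pi\mapsto\pi(u)$ is continuous into $\cB(\cH)$ with the weak operator topology and $a\mapsto\langle a\xi,\zeta\rangle$ is weak-operator continuous; hence the whole set is open. Taking the union over $u\in\cU(A)$ keeps it open. Therefore $\WM(A,\cH)$ is an intersection of countably many open sets, i.e.\ a $G_\delta$.

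The one point requiring a little care — and the main obstacle — is the reduction from arbitrary finite $\Omega\subseteq\cH$ to finite subsets of the fixed countable dense set $\{\xi_k\}$, together with the replacement of arbitrary $\eps>0$ by $1/m$. Concretely: given $\Omega=\{\eta_1,\dots,\eta_n\}\subseteq\cH$ and $\eps>0$, one may assume $\|\eta_i\|\le 1$ (rescaling only rescales inner products), pick $\xi_{k_i}$ with $\|\eta_i-\xi_{k_i}\|<\delta$ for a suitably small $\delta$ depending on $\eps$ and $n$, and choose $m$ with $1/m<\eps/2$; then a witnessing unitary $u$ for the $(\{\xi_{k_i}\},1/m)$-condition is also a witness for the $(\Omega,\eps)$-condition since $|\langle\pi(u)\eta_i,\eta_j\rangle-\langle\pi(u)\xi_{k_i},\xi_{k_j}\rangle|\le 2\delta+\delta^2$ for $\|\pi(u)\|=1$. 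This shows the displayed intersection indeed equals $\WM(A,\cH)$, and the lemma follows. (Note separability of $A$ is not needed for this lemma, only separability of $\cH$; it is used elsewhere in the section.)
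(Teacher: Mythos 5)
Your proof is correct and takes essentially the same route as the paper: the paper also reduces to the unitary group via Proposition~\ref{P-unitary} and exhibits $\WM(A,\cH)$ as $\bigcap_{n}\Gamma_n$, where $\Gamma_n$ is the open set of $\varphi\in\Rep(A,\cH)$ admitting a unitary $u$ with $|\langle\varphi(u)\xi,\zeta\rangle|<1/n$ for all $\xi,\zeta$ in the $n$-th member of an increasing sequence of finite sets with dense union. Your explicit density/approximation step is just the spelled-out version of the paper's (implicit) identification of this intersection with $\WM(A,\cH)$.
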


\begin{proof}
Write $G$ for the unitary group of $A$.
Take an increasing sequence $\Omega_1 \subseteq \Omega_2 \subseteq \dots$ of finite subsets
of $\cH$ with dense union in $\cH$. For every $n\in\Nb$ define $\Gamma_n$ to be the
set of all $\varphi\in\Rep (A,\cH )$ such that there exists a $u\in G$
satisfying $|\langle \varphi (u)\xi , \zeta\rangle | < 1/n$ for all $\xi ,\zeta\in\Omega_n$.
Then $\Gamma_n$ is open, and so the set $\Gamma = \bigcap_{n=1}^\infty \Gamma_n$ is a $G_\delta$.
By the characterization of weak mixing for unitary group representations described before 
Proposition~\ref{P-unitary}, $\Gamma$ is precisely the set of all representations
whose restriction to $G$ is weakly mixing, which
is equal to $\WM (A,\cH )$ by Proposition~\ref{P-unitary}.
\end{proof}

\begin{lemma}\label{L-wm prec}
Suppose that each point in $\widehat{A}_\fin$ 
is isolated in $\widehat{A}$. Let $\rho : A\to\cB (\cH )$ be a representation in $\widehat{A}_\fin$.
Then there exists a weakly mixing representation $\theta$ of $A$ on a separable Hilbert space 
such that $\rho\prec\theta$.
\end{lemma}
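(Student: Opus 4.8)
Since $\rho$ is a finite-dimensional irreducible representation that is isolated in $\widehat A$, the idea is to realize $\rho$ as a subrepresentation of a genuinely infinite representation whose ``complement'' soaks up $\rho$ well enough to kill all finite-dimensional pieces, and then to pass to a weakly mixing approximant using the $\prec$ machinery. Concretely, I would look for a weakly mixing representation $\sigma$ of $A$ on a separable Hilbert space with the property that $\rho \prec \sigma$; one naive candidate is a direct sum $\bigoplus_{k} \tau_k$ over a sequence of pairwise inequivalent finite-dimensional representations $\tau_k \neq \rho$ together with infinitely many copies of $\rho$ itself, but that is not weakly mixing. So instead the construction should build a \emph{single} representation that is weakly mixing yet has $\rho$ in its weak closure.

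First I would use the hypothesis that $\rho$ is isolated in $\widehat A$ to produce a central projection or a direct-summand decomposition: isolated points of $\widehat A$ correspond (for separable $A$) to direct summands, so $A \cong B \oplus C$ where $\widehat B = \{\rho\}$ (a matrix algebra, since $\rho$ is finite-dimensional and isolated, hence $B \cong M_d(\Cb)$ for $d = \dim\cH$) and $\widehat C = \widehat A \setminus \{\rho\}$. The point of this splitting is that $\rho$, viewed on $A$, factors through the projection onto $B$. Next I would choose a faithful representation $\pi_0$ of $C$ on a separable Hilbert space with no finite-dimensional subrepresentations coming from the $C$-side — or, more robustly, I would take any representation $\pi$ of $A$ whose kernel is small enough that $\rho \prec \pi$ but which is itself weakly mixing on the relevant unitary group. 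The cleanest route: let $\theta = \rho^{(\infty)} \oplus \mu$ where $\mu$ is a carefully chosen weakly mixing representation of $A$ on a separable space such that $\theta$ overall becomes weakly mixing. That cannot work with $\rho^{(\infty)}$ present, so the real move is different: I would take a sequence $(\rho_k)$ in $\widehat A$ converging to $\rho$ — impossible, since $\rho$ is isolated! That is exactly why isolation is the crucial hypothesis in the \emph{opposite} direction, so let me reconsider: we want $\rho \prec \theta$ with $\theta$ weakly mixing, and weak containment of an \emph{isolated} point is the hard case, because $\rho$ must actually be a subrepresentation of any $\theta$ weakly containing it (isolated points in the spectrum are not weakly contained in their complement). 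Hence $\theta$ must literally \emph{contain} $\rho$ as a subrepresentation, contradicting weak mixing of $\theta$ — unless ``$\rho \prec \theta$'' in the sense of Definition~\ref{D-prec} is genuinely weaker than weak containment. Indeed Definition~\ref{D-prec} only asks for orthonormal \emph{sets}, not subrepresentations, and the remark after it stresses $\prec$ differs from weak containment when $\pi$ is not irreducible — but here $\rho$ \emph{is} irreducible. So I must use that $\prec$ with a one-sided orthonormal-set condition can still hold even when $\rho$ is not a subrepresentation, provided we only approximate diagonal matrix coefficients $\langle \rho(a)\xi_i,\xi_i\rangle$ on orthonormal $n$-tuples, finitely many $a$, to within $\eps$.

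Given that, the plan becomes: build $\theta$ as an infinite ampliation-type object, say $\theta = \bigoplus_{k=1}^\infty \theta_k$ where each $\theta_k$ is a representation of $A$ on a finite- or infinite-dimensional space, chosen so that (a) collectively the $\theta_k$ contain, as \emph{approximate} invariant subspaces, better and better finite-dimensional models of $\rho$ — e.g.\ by taking $\theta_k$ to be a ``smeared'' version of $\rho$ obtained by tensoring $\rho$ with a weakly mixing representation $\sigma_k$ of $A$ and restricting, so that the diagonal coefficients of $\theta_k$ on suitable orthonormal tuples approach those of $\rho$ — while (b) no $\theta_k$ and no finite-rank corner of $\theta$ carries an actual finite-dimensional subrepresentation, making $\theta$ weakly mixing. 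For (a) one uses that $\rho$ is finite-dimensional so $\rho \otimes \sigma$, for $\sigma$ any representation, has $\rho$-type diagonal coefficients appearing scaled; more precisely, if $\sigma$ has almost-invariant vectors for the unitary group (which a suitably large $\sigma$ does along a sequence), then $\rho \otimes \sigma$ approximately contains $\rho$. For (b) one invokes that tensoring a finite-dimensional representation with a weakly mixing one yields a weakly mixing representation — the standard fact that weak mixing is absorbing under tensor products, which in the group picture is Theorem~2.23-type material and transfers to the unitary group of $A$ via Proposition~\ref{P-unitary}. Concretely I would set $\theta = \rho \otimes \sigma$ (interpreting $A$ acting on $\cH \otimes \cK$ via $a \mapsto \rho(a)\otimes\sigma(a)$ — wait, that is not a representation of $A$ unless we use a diagonal, which requires $A$ to be a Hopf-type object; for a plain C$^*$-algebra the right gadget is the representation $a \mapsto \rho(a) \otimes 1_\cK$ on $\cH\otimes\cK$, which is just $\rho^{(\dim\cK)}$ and not weakly mixing). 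So the honest construction must use the \emph{external} tensor structure only through the group-representation side: restrict to $G = \cU(A)$, where $u \mapsto \rho(u)\otimes\sigma(u)$ \emph{is} a unitary representation of the group $G$; it is weakly mixing because $\sigma|_G$ is weakly mixing and weak mixing is tensor-absorbing; and it weakly contains $\rho|_G$ because $\sigma|_G$, chosen with almost-invariant vectors, makes $\rho(u)\otimes\sigma(u)$ approximate $\rho(u)$ on the vectors $\xi \otimes \eta_j$ with $\eta_j$ almost-invariant. But this yields a representation of the group $G$, not of the algebra $A$; to promote it to a unital representation of $A$ I would extend by universality or, better, note that it suffices by Proposition~\ref{P-unitary} to produce the desired object at the unitary-group level and then take $\theta$ to be \emph{any} unital representation of $A$ whose restriction to $G$ weakly contains $\rho|_G$ and is weakly mixing — and such a $\theta$ exists by realizing the group representation inside a GNS-type representation of $A$, or by invoking that the universal representation of $A$ restricted to $G$ is universal for $G$-representations weakly contained in it.

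\textbf{Main obstacle.} The delicate point, and the one I would spend the most care on, is reconciling ``$\rho \prec \theta$'' (Definition~\ref{D-prec}) with weak mixing of $\theta$ when $\rho$ is \emph{isolated} in $\widehat A$: naive weak containment would force $\rho \leq \theta$ and contradict weak mixing, so the entire argument hinges on $\prec$ being strictly weaker (only diagonal coefficients on orthonormal tuples, finitely many at a time) together with the tensor-absorption property of weak mixing. Making the almost-invariant-vector argument for $\rho \otimes \sigma$ precise at the level of $\Rep(A,\cH)$ — ensuring the resulting $\theta$ is genuinely a \emph{unital $*$-representation of $A$} on a \emph{separable} Hilbert space, not merely of $\cU(A)$ — is where the technical bookkeeping concentrates, and I expect the cleanest implementation routes this through Lemma~\ref{L-prec} and the observation that weak mixing is detected on $\cU(A)$ (Proposition~\ref{P-unitary}), letting us work with group representations and then inflate.
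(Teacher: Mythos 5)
You ran squarely into the obstruction that sinks the statement as literally printed, but drew the wrong conclusion from it. Since $\rho$ is irreducible, the relation $\prec$ of Definition~\ref{D-prec} coincides with ordinary weak containment (the paper says exactly this right after the definition), and by Wang's theorem an isolated finite-dimensional point that is weakly contained in $\theta$ must be a genuine subrepresentation of $\theta$ --- this is precisely the fact the paper uses in Theorem~\ref{T-closed}. So if $\rho$ were isolated, no weakly mixing $\theta$ with $\rho\prec\theta$ could exist, and the hypothesis as printed is simply a typo: the lemma is invoked in Theorem~\ref{T-dense} and Theorem~\ref{T-BV} under the assumption that \emph{no} point of $\widehat{A}_\fin$ is isolated, and the paper's proof uses non-isolation as its engine. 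You glimpsed this (``isolation is the crucial hypothesis in the opposite direction'') but then pressed on hoping $\prec$ was strictly weaker than weak containment, which it is not here, and the construction you fall back on cannot close the gap: $a\mapsto\rho(a)\otimes\sigma(a)$ is not a representation of $A$ (as you concede), there is no mechanism for ``promoting'' a unitary representation of the topological group $\cU(A)$ back to a $^*$-representation of $A$, and ``almost invariant vectors'' for $\cU(A)$ has no $A$-level meaning unless $A$ has a character. Worse, since $A$ is spanned by its unitaries, any unital $\theta$ whose restriction to $\cU(A)$ approximated the diagonal coefficients of $\rho$ would satisfy $\rho\prec\theta$ in the sense of Definition~\ref{D-prec}, and Wang's theorem would again force $\rho\le\theta$ under your isolation hypothesis --- so under the hypothesis you adopted the task is impossible, and under the intended hypothesis your proposal contains no construction.

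For comparison, the paper's argument (under the corrected hypothesis) is measure-theoretic and entirely absent from your proposal. One first reduces to the case that $\rho$ is not a limit of infinite-dimensional irreducibles (otherwise their direct sum is weakly mixing and works), and chooses a countable neighbourhood basis $\{U_n\}$ of $\rho$ in $\widehat{A}$ containing only finite-dimensional classes. A Baire category argument in $\widehat{A}$ shows each $U_n$ is uncountable --- this is exactly where non-isolation of finite-dimensional points enters --- so some fixed dimension $d_n$ occurs uncountably often in $U_n$. Using a Borel selection for unitary equivalence on the Polish space $\Irr(A,\cH_n)$, one produces an atomless probability measure $\mu_n$ concentrated on irreducibles lying in $U_n$ which gives measure zero to every equivalence class; the direct-integral representation $\theta_n$ on $L^2(\Irr(A,\cH_n),\cH_n)$ is then weakly mixing, because a finite-dimensional subrepresentation would force a non-null set of fibres to be equivalent to a single irreducible, and $\rho\prec\bigoplus_n\theta_n$ is verified directly by approximating $\rho$ by nearby fibres, using an isometry tensored with a normalized indicator function. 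Beyond your (correct and valuable) observation that isolation is fatal rather than helpful, the argument still needs to be found.
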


\begin{proof}
We may assume that $\rho$ is not the limit of a sequence $\{ \pi_n \}$ 
of infinite-dimensional representations in $\widehat{A}$, for 
in that case the representation $\pi = \bigoplus_{n=1}^\infty \pi_n$ is weakly mixing and 
$\rho\prec\pi$. Since $\widehat{A}$ is second countable (\cite{Dix77}, Proposition~3.3.4),
we can then find a countable neighbourhood basis $\{ U_n \}_{n\in\Nb}$ for
$\rho$ in $\widehat{A}$ such that no $U_n$ contains an infinite-dimensional representation.

Let $n\in\Nb$. We construct a weakly mixing representation $\theta_n$ in $\widehat{A}$
as follows. First we argue that $U_n$ is uncountable. Suppose that this is not the case.
As $\widehat{A}$ is a Baire space (\cite{Dix77}, Theorem~3.4.13) and $U_n$ is open, 
$U_n$ is itself a Baire space. For every $\omega\in U_n$ the singleton $\{ \omega \}$ is closed
by finite-dimensionality (see Section~3.6 of \cite{Dix77}), and so by the Baire property 
there exists an $\omega_0 \in U_n$ such that $\{ \omega_0 \}$ is open, 
which means that $\omega_0$ is isolated, contradicting our hypothesis. Thus $U_n$ is uncountable.
We can consequently find a $d_n \in\Nb$ such that
$U_n$ contains uncountably many $d_n $-dimensional representations.

Fix a Hilbert space $\cH_n$ of dimension $d_n$.
Denote by $\Irr (A,\cH_n )$ the set of irreducible representations in $\Rep (A,\cH_n )$.
We observe the following:
\begin{enumerate}
\item $\Irr (A,\cH_n )$ is open in $\Rep (A,\cH_n )$,

\item every equivalence class in $\Irr (A,\cH_n )$ is closed in $\Irr (A,\cH_n )$,

\item for every open set $U\subseteq \Irr (A,\cH_n )$ the set of all elements in
$\Irr (A,\cH_n )$ which are equivalent to some element of $U$ is open.
\end{enumerate}
Assertion (iii) is clear.
To verify (i), let $\{ \pi_k \}$ be a convergent 
sequence in $\Rep (A,\cH_n )$ whose terms are not irreducible
and let us show that its limit $\pi$ is not irreducible.
For every $k$ choose a nonzero projection
$P_k \in \pi_k (A)'$ of rank less than $d_n$. 
In view of the finite-dimensionality of $\cH_n$,
we may assume by passing to a subsequence that the sequence $\{ P_k \}$ converges in $\cB (\cH_n )$, 
in which case its limit $P$ is a nonzero 
projection which commutes with $\pi (A)$. This means that $\pi$ is not
irreducible, yielding (i). Finally, to verify (ii) we let $\{ \pi_k \}$ be a convergent sequence 
in $\Irr (A,\cH_n )$ such that for every $k$
there exists a unitary operator $Z_k$ which conjugates $\pi_k$ to $\pi_1$. 
By the finite-dimensionality of $\cH_n$,
there is a subsequence $\{ Z_{k_j} \}_j$ that converges to a unitary operator $Z$,
which must then conjugate the limit of $\{ \pi_k \}$ to $\pi_1$, yielding (ii).

Since open subsets of Polish spaces are themselves Polish spaces (\cite{Kec95}, Theorem~3.11), 
we infer from (i) that $\Irr (A,\cH_n )$ is a Polish space. Assertions (ii) and (iii) then permit
us to apply a standard selection theorem (\cite{Kec95}, Theorem~12.16) which provides  
a Borel set $B_n \subseteq \Irr (\G ,\cH_n )$ of representatives for the relation of unitary equivalence.
Write $W_n$ for the set of all $\pi\in\Irr (A,\cH_n )$ which, as elements in $\widehat{A}$,
belong to $U_n$. This is clearly an open set in $\Irr (A,\cH_n )$, and it is uncountable
by our choice of $d_n$. Thus $B_n \cap W_n$ is an uncountable Borel set, which 
means that it is isomorphic as a Borel space to the unit interval
with Lebesgue measure and hence admits an atomless Borel probability measure of full support. 
Let $\mu_n$ be the push forward of this measure under the inclusion 
$B_n \cap W_n\hookrightarrow \Irr (A,\cH_n )$, 
and note that $\mu_n (C) = 0$ for every unitary equivalence class $C$ in $\Irr (A,\cH_n )$,

Setting $Y_n = \Irr (A,\cH_n )$, we next consider the Hilbert space $L^2 (Y_n ,\cH_n )$ of
(classes of) $\cH_n$-valued functions on $Y_n$ with inner product 
\[
\langle f,g\rangle = \int_{Y_n} \langle f(\pi ),g(\pi )\rangle \, d\mu_n (\pi ) . 
\]
Finally, we define the representation $\theta_n : A\to \cB (L^2 (Y_n,\cH_n ))$ by setting
\[
(\theta_n (a)\zeta )(\pi ) = \pi (a)\zeta (\pi ) 
\]
for $a\in A$, $\zeta\in L^2 (Y_n,\cH_n )$, and $\pi\in Y_n$.

We next verify that $\theta_n$ is weakly mixing. Suppose that this is not the case.
Then there is a finite-dimensional irreducible representation
$\pi : A\to \cB (\cH )$ and an isometric operator $Z : \cH \to L^2 (Y_n,\cH_n )$
such that $Z\pi (a) = \theta_n (a)Z$ for all $a\in A$. Then for a.e.\ $\rho\in Y_n$
and every $a\in A$ and $\eta\in\cH$ we have
\[
 (Z\pi (a) \eta )(\rho) 
= (\theta_i (a) Z \eta )(\rho )
= \rho (a) ((Z\eta )(\rho )) .
\]
so that the operator $Z_\rho : \cH \to\cH_n$ given by $Z_\rho \eta = (Z\eta )(\rho )$
satisfies $Z_\rho \pi (a) = \rho (a)Z$ for all $a\in A$. As $Z$ is isometric,
the operator $Z_\rho$ must be nonzero for all $\rho$ in a nonull subset of $Y_n$. But each such $\rho$ 
is equivalent to $\pi$ by irreducibility, contradicting the fact that the measure of
every unitary equivalence class is zero. Therefore $\theta_n$ is weakly mixing.

Now set $\theta = \bigoplus_{n=1}^\infty \theta_n$. Then $\theta$ is weakly mixing since each summand is
weakly mixing. It remains to show that $\rho\prec\theta$. Let $\Omega$
be a finite subset of $A$ and $\eps > 0$. Then we can find an $n\in\Nb$
such that for every $\pi\in U_n$ there is an isometry $V:\cH\to\cH_n$
such that $\| V\rho (a) - \pi (a)V \| < \eps /2$ for all $a\in\Omega$. 
Since bounded sets in $\cB (\cH_n )$ are precompact and representations are contractive,
we can find an open set $U\subseteq Y_n$ with $\mu (U) > 0$
such that for all $a\in\Omega$ and $\pi , \pi'\in U$ one has 
$\| \pi (a) - \pi' (a) \| < \eps /2$.
Choose a $\pi_0 \in U$ and an isometry $V:\cH\to\cH_n$ such that for all $a\in\Omega$
we have $\| V\rho (a) - \pi_0 (a)V \| < \eps /2$ and
hence $\| V\rho (a) - \pi (a)V \| < \eps$ for every $\pi\in U$.
Writing $\unit_U$ for the indicator function of $U$, we set 
$f = \mu_n (U)^{-1/2} \unit_U$, which is a
unit vector in $L^2 (Y_n ,\cH_n )$. Define an isometry $\tilde{V} : \cH\to L^2 (Y_n ,\cH_n )$ 
by $\tilde{V}\xi = f\otimes V\xi \in L^2 (Y_n ,\mu_n ) \otimes\cH_n \cong L^2 (Y_n ,\cH_n )$.
Then for all $a\in\Omega$ and norm-one vectors $\xi\in\cH$ we have
\begin{align*}
\| (\tilde{V}\rho (a) - \theta_n (a)\tilde{V})\xi \|^2
&= \| f \otimes V\rho (a)\xi - \theta_n (a) (f \otimes V\xi ) \|^2 \\
&= \frac{1}{\mu_n (U)} \int_U \| (V\rho (a) - \pi (a)V)\xi \|^2 \, d\mu_n (\pi ) \\
&\leq \frac{1}{\mu_n (U)} \int_U \eps^2 \, d\mu_n (\pi ) \\
&= \eps^2 ,
\end{align*}
so that $\| \tilde{V}\rho (a) - \theta_n (a)\tilde{V} \| < \eps$.
We conclude that $\rho\prec\theta$, as desired.
\end{proof}

We now come to the main theorems of this section.

\begin{theorem}\label{T-dense}
Suppose that no point in $\widehat{A}_\fin$ is isolated in $\widehat{A}$.
Let $\cH$ be a separable Hilbert space.
Then $\WM (A,\cH )$ is a dense $G_\delta$ in $\Rep (A,\cH )$.
\end{theorem}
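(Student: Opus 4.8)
The $G_\delta$ assertion is already contained in Lemma~\ref{L-G delta}, so the plan is entirely devoted to density, and the idea is to reduce it, through the relation $\prec$ of Definition~\ref{D-prec}, to the single-representation statement of Lemma~\ref{L-wm prec}. Fix $\pi\in\Rep(A,\cH)$ (we will need $\cH$ infinite-dimensional in order to invoke Lemma~\ref{L-prec}, which is the relevant case; then $\pi\neq 0$ automatically). First I would observe that it suffices to construct a weakly mixing representation $\theta$ of $A$ on \emph{some} separable Hilbert space with $\ker\theta\subseteq\ker\pi$, i.e.\ with $\pi$ weakly contained in $\theta$. Indeed, given such a $\theta$, the remark following Definition~\ref{D-prec} gives $\pi\prec\theta^{\oplus\Nb}$; the representation $\theta^{\oplus\Nb}$ is again weakly mixing, acts on a separable Hilbert space, and is infinite-dimensional (being a nonzero weakly mixing representation, as $\pi\neq 0$), so after identifying that space with $\cH$ we obtain $\theta'\in\WM(A,\cH)$ with $\pi\prec\theta'$. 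Lemma~\ref{L-prec} then yields $\pi\in\overline{\{\kappa\in\Rep(A,\cH):\kappa\cong\theta'\}}$, and since weak mixing is invariant under unitary equivalence this set is contained in $\WM(A,\cH)$; as $\pi$ was arbitrary, $\WM(A,\cH)$ is dense.

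To build $\theta$: since $A/\ker\pi$ is separable, its spectrum --- realized inside $\widehat A$ as the support of $\pi$, i.e.\ the closed set $F=\{\sigma\in\widehat A:\sigma\prec\pi\}$ of irreducible representations annihilating $\ker\pi$ --- is second countable, so I would choose a countable dense subset $\{\sigma_n\}_{n\in\Nb}$ of $F$; by the hull--kernel calculus $\bigcap_n\ker\sigma_n=\bigcap_{\sigma\in F}\ker\sigma=\ker\pi$. For each $n$, if $\sigma_n$ is infinite-dimensional I keep it unchanged; if $\sigma_n$ is finite-dimensional then $\sigma_n\in\widehat{A}_\fin$ is, by hypothesis, not isolated in $\widehat A$, and Lemma~\ref{L-wm prec} supplies a weakly mixing representation $\vartheta_n$ on a separable Hilbert space with $\sigma_n\prec\vartheta_n$; since $\sigma_n$ is irreducible this is ordinary weak containment, i.e.\ $\ker\vartheta_n\subseteq\ker\sigma_n$. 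Now set
\[
\theta \;=\; \bigoplus_{n\,:\,\dim\sigma_n=\infty}\sigma_n \;\oplus\; \bigoplus_{n\,:\,\dim\sigma_n<\infty}\vartheta_n .
\]
A direct sum of weakly mixing representations is weakly mixing: a nonzero finite-dimensional subrepresentation of $\bigoplus_i\rho_i$ would, after projecting its intertwiner onto one summand, produce a nonzero intertwiner into some $\rho_i$ whose range is a nonzero finite-dimensional $\rho_i$-invariant subspace, which is impossible. Hence $\theta$ is weakly mixing, and $\ker\theta\subseteq\ker\sigma_n$ for every $n$, so $\ker\theta\subseteq\bigcap_n\ker\sigma_n=\ker\pi$, as required.

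The genuine content of the argument sits in Lemma~\ref{L-wm prec}: that is the only place the hypothesis ``no point of $\widehat{A}_\fin$ is isolated in $\widehat A$'' is used, and its proof is the technical heart --- a Baire-category argument forcing uncountably many representations of some fixed finite dimension to accumulate at a given finite-dimensional point, a Borel selection of unitary-equivalence representatives carrying an atomless measure of full support, and the associated measurable-field representation, which is weakly mixing because every equivalence class is null and weakly contains the point because it lies in the support of the measure. Granting that lemma, the steps above are routine bookkeeping; the only things to watch are the consistent passage between the finite-approximation relation $\prec$ of Definition~\ref{D-prec} and ordinary weak containment (handled by the $\oplus\Nb$ device, by their coincidence on irreducible representations, and by Lemma~\ref{L-prec}) and the verification that direct sums preserve weak mixing and that $\bigcap_n\ker\sigma_n=\ker\pi$. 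A fully self-contained treatment would instead have to reproduce the direct-integral construction of Lemma~\ref{L-wm prec}, which is where the real work lies.
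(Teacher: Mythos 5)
Your argument is correct, but it reaches the density statement by a genuinely different route from the paper. The paper works inside the given representation: by a maximality argument it writes $\pi=\pi_0\oplus\bigoplus_{i\in I}\pi_i$ with $\pi_0$ weakly mixing and each $\pi_i$ finite-dimensional irreducible, replaces each $\pi_i$ by a weakly mixing $\theta_i$ with $\pi_i\prec\theta_i$ via Lemma~\ref{L-wm prec}, and then asserts $\pi\prec\rho:=\pi_0\oplus\bigoplus_i\theta_i$, so that Lemma~\ref{L-prec} applies to $\rho$ itself with no ampliation. You never decompose $\pi$ at all: you pass to the hull $F$ of $\ker\pi$ in $\widehat A$, take a countable dense subset $\{\sigma_n\}$ (so that $\bigcap_n\ker\sigma_n=\ker\pi$ by hull--kernel), upgrade the finite-dimensional $\sigma_n$ by Lemma~\ref{L-wm prec}, and obtain a weakly mixing $\theta$ with $\ker\theta\subseteq\ker\pi$, finally converting ordinary weak containment back into the relation $\prec$ through the ampliation equivalence $\pi\preceq\theta\Leftrightarrow\pi\prec\theta^{\oplus\Nb}$ recorded after Definition~\ref{D-prec}. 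The trade-off: your route avoids the Zorn-type decomposition and the tacit verification that $\prec$ is preserved under summandwise replacement in direct sums, but leans on the ampliation remark (stated in the paper without proof, the C$^*$-analogue of the group fact in \cite{Kec10}) and on standard spectral facts (second countability of $\widehat A$, closed ideals being intersections of the primitive ideals containing them); both proofs concentrate the real work in Lemmas~\ref{L-wm prec} and \ref{L-prec}, exactly as you say. Two minor points: Lemma~\ref{L-wm prec} as printed hypothesizes that each point of $\widehat A_\fin$ is isolated, which is evidently a typo for the intended ``no point is isolated''; you used it in the intended form, as the paper's own proof does. And your observation that $\cH$ should be infinite-dimensional is apt --- for nonzero finite-dimensional $\cH$ with $\Rep(A,\cH)\neq\emptyset$ one has $\WM(A,\cH)=\emptyset$, so the theorem is really about the infinite-dimensional case, consistent with the paper's own reliance on Lemma~\ref{L-prec}.
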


\begin{proof}
By Lemma~\ref{L-G delta} it suffices to show the density of $\WM (A,\cH )$.
Let $\pi\in\Rep (A,\cH )$. By a maximality argument involving the collection
of direct sums of finite-dimensional subrepresentations of $\pi$,
we can write $\pi = \pi_0 \oplus \bigoplus_{i\in I} \pi_i$
where $\pi_0$ is weakly mixing and $\pi_i$ is finite-dimensional
for every $i\in I$. By decomposing further we may assume that $\pi_i$
is irreducible for each $i\in I$. 
By Lemma~\ref{L-wm prec}, for each $i\in I$ we can find a weak mixing representation
$\theta_i$ on a separable Hilbert space such that $\pi_i \prec\theta_i$.
Set $\rho = \pi_0 \oplus \bigoplus_{i\in I} \theta_i$. Then $\pi\prec\rho$,
and $\rho$ acts on a separable Hilbert space since $I$ is countable by the separability of $\cH$.
It follows by Lemma~\ref{L-prec} that $\pi$
belongs to the closure of the set of all $\kappa\in\Rep (G,\cH )$ such that $\kappa\cong\rho$,
and hence to the closure of $\WM (A,\cH )$, yielding the desired density.
\end{proof}

\begin{theorem}\label{T-closed}
Suppose that $\widehat{A}_\fin \neq\emptyset$
and each point in $\widehat{A}_\fin$ is isolated in $\widehat{A}$.
Let $\cH$ be a separable infinite-dimensional Hilbert space.
Then $\WM (A,\cH )$ is closed and nowhere dense in $G_\delta$ in $\Rep (A,\cH )$.
\end{theorem}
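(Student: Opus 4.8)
The statement to be established is that $\WM(A,\cH)$ is closed and nowhere dense in $\Rep(A,\cH)$. I would split this into the closedness assertion and, since a closed set is nowhere dense precisely when its complement is dense, the assertion that $\Rep(A,\cH)\setminus\WM(A,\cH)$ is dense. The structural input powering both parts is that an isolated finite-dimensional point of $\widehat{A}$ is cut off by a central projection: if $\tau\in\widehat{A}_\fin$ acts on $\cH_\tau$ with $\dim\cH_\tau = d$, then $\{\tau\}$ is clopen in $\widehat{A}$ (open since $\tau$ is isolated, closed by finite-dimensionality, cf.\ Section~3.6 of \cite{Dix77}), so its complement is the hull of the ideal $I_\tau = \bigcap_{\sigma\in\widehat{A}\setminus\{\tau\}}\ker\sigma$; since $I_\tau$ then has a one-point spectrum it is simple, and a simple C$^*$-algebra carrying a $d$-dimensional irreducible representation is isomorphic to $M_d(\Cb)$. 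Thus $I_\tau$ is unital, and a short computation (for $a\in A$ one has $ae_\tau,e_\tau a\in I_\tau$, whence $e_\tau a e_\tau = ae_\tau$ and $e_\tau a e_\tau = e_\tau a$) shows that its unit $e_\tau$ is a central projection in $A$ with $e_\tau A = I_\tau\cong\cB(\cH_\tau)$, the associated summand evaluation being equivalent to $\tau$.

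\emph{Closedness.} Using these projections I would show that
\[
\WM(A,\cH) = \bigcap_{\tau\in\widehat{A}_\fin}\{\rho\in\Rep(A,\cH) : \rho(e_\tau) = 0\}.
\]
Indeed, since $e_\tau$ is central, $\rho(e_\tau)\cH$ is a reducing subspace on which $\rho$ factors through $A\to e_\tau A\cong M_d(\Cb)$ and so is a nonzero multiple of $\tau$ whenever it is nonzero; hence $\tau\le\rho$ if and only if $\rho(e_\tau)\neq 0$. A representation is weakly mixing iff it has no nonzero finite-dimensional subrepresentation, equivalently no finite-dimensional irreducible subrepresentation, equivalently no subrepresentation equivalent to some $\tau\in\widehat{A}_\fin$ (every finite-dimensional irreducible representation of $A$ lies in $\widehat{A}_\fin$), equivalently $\rho(e_\tau) = 0$ for every $\tau\in\widehat{A}_\fin$ --- here is where the hypothesis that \emph{every} point of $\widehat{A}_\fin$ is isolated is used. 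Finally, each set $\{\rho : \rho(e_\tau) = 0\}$ is closed, because $\rho\mapsto\rho(e_\tau)$ is continuous from $\Rep(A,\cH)$ with the point-strong operator topology to $\cB(\cH)$ with the strong operator topology and $\{0\}$ is closed there; an intersection of closed sets is closed.

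\emph{Nowhere density.} Fix $\tau\in\widehat{A}_\fin$, which exists by hypothesis. Given an arbitrary $\pi\in\Rep(A,\cH)$, set $\rho = \pi\oplus\tau^{\oplus\Nb}$, a representation on a separable infinite-dimensional Hilbert space which is not weakly mixing since it contains the nonzero finite-dimensional subrepresentation $\tau$. Using the copy of $\cH$ as the first summand of $\rho$ one gets $\pi\prec\rho$ at once: for a finite $\Omega\subseteq A$ and an orthonormal set $\{\xi_1,\dots,\xi_n\}\subseteq\cH$, the vectors $\xi_1,\dots,\xi_n$ sitting in that first summand make the inequality in Definition~\ref{D-prec} an equality. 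By Lemma~\ref{L-prec}, $\pi$ lies in the closure of $\{\kappa\in\Rep(A,\cH) : \kappa\cong\rho\}$, and every such $\kappa$ fails to be weakly mixing since weak mixing passes to unitarily equivalent representations. Hence $\Rep(A,\cH)\setminus\WM(A,\cH)$ is dense, and, being closed, $\WM(A,\cH)$ is nowhere dense.

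\emph{Main obstacle.} Both halves become essentially formal once the correspondence ``isolated finite-dimensional point $\leftrightarrow$ central $M_d$-summand'' is in place, so the one spot demanding care is that structural fact --- in particular that finite-dimensional points of the spectrum are closed, that a closed ideal of a closed ideal of $A$ is a closed ideal of $A$ (so that $I_\tau$ is simple), and that the unit of an ideal is automatically a central projection. The nowhere-density half is genuinely soft, amounting to the observation that $\pi\prec\pi\oplus(\text{anything})$ together with Lemma~\ref{L-prec}; it does not even use that the points of $\widehat{A}_\fin$ are isolated, only that $\widehat{A}_\fin$ is nonempty.
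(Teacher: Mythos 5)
Your proposal is correct. The nowhere-density half is essentially the paper's own argument: both rest on the trivial weak containment $\pi\prec\pi\oplus(\text{finite-dimensional})$ together with Lemma~\ref{L-prec}, and, as you observe, this uses only $\widehat{A}_\fin\neq\emptyset$. The closedness half, however, takes a genuinely different route. The paper argues via weak containment: given a non-weakly-mixing $\pi=\pi_0\oplus\pi_1$ with $\pi_1$ finite-dimensional irreducible and a sequence $\rho_n\to\pi$ of representations, it forms $\rho=\bigoplus_n\rho_n$, notes $\pi_1\prec\rho$, and invokes Wang's theorem (\cite{Wan75}, Theorem~1.7) --- an isolated point of $\widehat{A}$ that is weakly contained must be an actual subrepresentation --- to place $\pi_1$ inside some $\rho_n$, so no sequence of weakly mixing representations can converge to $\pi$. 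You instead exploit the structure theory behind isolation: each isolated $\tau\in\widehat{A}_\fin$ gives a central projection $e_\tau$ with $e_\tau A\cong M_d(\Cb)$, and $\WM(A,\cH)=\bigcap_\tau\{\rho:\rho(e_\tau)=0\}$, which is manifestly closed in the point-strong topology. Your structural lemma is sound (the open singleton $\{\tau\}$ corresponds to a nonzero closed ideal with one-point spectrum, hence simple, hence isomorphic to $M_d(\Cb)$ since it admits a $d$-dimensional irreducible representation, and the unit of a closed ideal is automatically a central projection), and the equivalence ``weakly mixing iff $\rho(e_\tau)=0$ for all $\tau\in\widehat{A}_\fin$'' uses the isolation hypothesis exactly where it should. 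What each approach buys: the paper's proof is shorter if one takes Wang's isolation theorem as a black box, while yours is self-contained, gives an explicit description of $\WM(A,\cH)$ as a point-strongly closed set cut out by central projections, and avoids the implicit appeal to first countability of $\Rep(A,\cH)$ that the paper's sequential argument requires.
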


\begin{proof}
By assumption there exists a $\rho\in\widehat{A}_\fin$.
Let $\pi\in\Rep (A,\cH )$. Then clearly $\pi\prec\pi\oplus\rho$ and 
so by Lemma~\ref{L-prec} the representation $\pi$
belongs to the closure of the set of all $\kappa\in\Rep (G,\cH )$ such that $\kappa\cong\pi\oplus\rho$,
showing that the complement of $\WM (A,\cH )$ is dense in $\Rep (A,\cH )$.

Now let $\pi$ be a representation in $\Rep (A,\cH )$ which is not weakly mixing.
Then we can write $\pi = \pi_0 \oplus\pi_1$ where $\pi_1$ is finite-dimensional,
and we may assume that $\pi_1$ is irreducible. Now suppose that $\{ \rho_n \}$ is a sequence
in $\Rep (A,\cH )$ converging to $\rho$ and set $\rho = \bigoplus_{n=1}^\infty \rho_n$. 
Then $\pi\prec\rho$ and hence $\pi_1 \prec\rho$, which implies that
$\pi_1$ is a subrepresentation of $\rho$ 
since $\pi_1$ is isolated in $\widehat{A}_\fin$ (\cite{Wan75}, Theorem~1.7). 
Since $\pi_1$ is irreducible, there must
exist an $n\in\Nb$ such that $\pi_1$ is a subrepresentation of $\rho_n$.
We deduce from this that $\pi$ has a neighbourhood in $\Rep (A,\cH )$ which does not
intersect $\WM (A,\cH )$. This shows that the complement of $\WM (A,\cH )$
is open and hence completes the proof.
\end{proof}

\section{Weak mixing and Property (T) for quantum groups}\label{S-wm repr}

We now return to the context of quantum groups and discuss the notions of weak mixing and property (T).  
Let $\G$ be a locally compact quantum group.

\begin{definition}\label{D-wm quantum}
A unitary representation $U\in M(C_0 (\G ) \otimes \cK (\cH )) $ of $\G$ 
is {\it weakly mixing} if it contains no nonzero finite-dimensional subrepresentation.
\end{definition}

Since finite-dimensionality is preserved under the canonical correspondence between 
unitary representations of $\G$ and nondegenerate representations of $C_0^\univ (\widehat{\G} )$, 
by Proposition~\ref{P-unitary} we obtain the following.

\begin{proposition}
Let $U\in M(C_0 (\G ) \otimes \cK (\cH )) $ be a unitary representation, let $\pi_U$ be the  
corresponding representation of $C_0^\univ (\widehat{\G})$, and let $\pi_U^+$ be the canonical extension 
of $\pi_U$ to a unital representation of the unitization $C_0^\univ (\widehat{\G})^+$.  
Then $U$ is weakly mixing if and only 
if the restriction of $\pi_U^+$ to the unitary group of $C_0^\univ (\widehat{\G})^+$ is weakly mixing.
\end{proposition}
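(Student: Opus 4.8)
The plan is to reduce the statement to the already-established correspondence facts together with Proposition~\ref{P-unitary}. First I would recall that, by the discussion at the end of Section~\ref{S-preliminaries}, passing between a unitary representation $U\in M(C_0(\G)\otimes\cK(\cH))$ and the associated nondegenerate $^*$-representation $\pi_U$ of $C_0^\univ(\widehat\G)$ is a bijection that respects subrepresentations: a projection $P\in\cB(\cH)$ gives a subrepresentation $(1\otimes P)U(1\otimes P)$ precisely when $P$ commutes with $\pi_U(C_0^\univ(\widehat\G))$, and the resulting subrepresentation corresponds to $a\mapsto P\pi_U(a)P$. Since $\pi_U^+$ is the canonical unital extension to the unitization $C_0^\univ(\widehat\G)^+$, the set of projections commuting with $\pi_U^+(C_0^\univ(\widehat\G)^+)$ coincides with those commuting with $\pi_U(C_0^\univ(\widehat\G))$ (adjoining the unit adds nothing to the commutant), so the lattice of subrepresentations of $U$ is carried bijectively, and dimension-preservingly, onto the lattice of subrepresentations of $\pi_U^+$.

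Next I would invoke the definitions: $U$ is weakly mixing (Definition~\ref{D-wm quantum}) iff it has no nonzero finite-dimensional subrepresentation, and by the previous paragraph this happens iff $\pi_U^+$, viewed as a unital representation of the separable unital C$^*$-algebra $C_0^\univ(\widehat\G)^+$, has no nonzero finite-dimensional subrepresentation in the sense of Definition~\ref{D-wm algebra}, i.e.\ iff $\pi_U^+$ is weakly mixing as a unital C$^*$-algebra representation. One should remark here that second countability of $\G$ makes $C_0(\widehat\G)$ — hence $C_0^\univ(\widehat\G)$ and its unitization — separable, so Proposition~\ref{P-unitary} applies; if one wants the statement for general $\G$ the only use of Proposition~\ref{P-unitary} is the purely algebraic fact that a unital C$^*$-algebra is the linear span of its unitaries, which needs no separability.

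Finally I would apply Proposition~\ref{P-unitary} directly to $A = C_0^\univ(\widehat\G)^+$ and the representation $\pi_U^+$: it asserts that $\pi_U^+$ is weakly mixing if and only if its restriction to the unitary group of $C_0^\univ(\widehat\G)^+$ is weakly mixing (in the sense of condition~(i)/(ii) recalled before Proposition~\ref{P-unitary}). Chaining the three equivalences — $U$ weakly mixing $\iff$ $\pi_U^+$ weakly mixing $\iff$ restriction to the unitary group weakly mixing — yields the claim.

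The only genuine point requiring care, and the one I would write out, is the first equivalence: that adjoining a unit to $C_0^\univ(\widehat\G)$ and passing to $\pi_U^+$ does not change which projections commute with the image, and that finite-dimensional subrepresentations of $U$ correspond exactly to finite-dimensional subrepresentations of $\pi_U^+$. This is essentially bookkeeping built on the last paragraph of Section~\ref{S-preliminaries}, so I do not expect a real obstacle — the proposition is a one-line consequence once that bookkeeping is acknowledged, which is presumably why the authors phrase it as an immediate corollary ("by Proposition~\ref{P-unitary} we obtain the following"). Accordingly my proof would be short:

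\begin{proof}
Adjoining a unit does not enlarge the commutant, so a projection $P\in\cB(\cH)$ commutes with $\pi_U^+(C_0^\univ(\widehat\G)^+)$ if and only if it commutes with $\pi_U(C_0^\univ(\widehat\G))$, which by the remarks at the end of Section~\ref{S-preliminaries} holds if and only if $1\otimes P$ commutes with $U$; in that case the corresponding subrepresentations all act on $P\cH$ and hence have the same (finite or infinite) dimension. Thus $U$ has no nonzero finite-dimensional subrepresentation if and only if the unital representation $\pi_U^+$ of the separable unital C$^*$-algebra $C_0^\univ(\widehat\G)^+$ has none, i.e.\ if and only if $\pi_U^+$ is weakly mixing in the sense of Definition~\ref{D-wm algebra}. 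By Proposition~\ref{P-unitary} this last condition is equivalent to the restriction of $\pi_U^+$ to the unitary group of $C_0^\univ(\widehat\G)^+$ being weakly mixing, which completes the proof.
\end{proof}
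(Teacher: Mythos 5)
Your proof is correct and follows the same route as the paper: the subrepresentation correspondence from the end of Section~\ref{S-preliminaries} (with the observation that unitization does not change the commutant) shows finite-dimensional subrepresentations of $U$ and of $\pi_U^+$ match up, and then Proposition~\ref{P-unitary} applied to $C_0^\univ(\widehat\G)^+$ finishes the argument. The paper treats this as an immediate consequence of exactly these two facts, so your write-up simply makes the bookkeeping explicit.
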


Let $\cH$ be a fixed Hilbert space. Write $\Rep (\G ,\cH )$ 
for the collection of all unitary representations of $G$
on $\cH$, and equip it with the point-strict topology it inherits as a subset of $M(C_0 (\G ) \otimes \cK (\cH ))$.
Let $\WM (\G,\cH ) \subseteq \Rep (\G ,\cH )$ be the subcollection of all weakly mixing representations.  
Write $\Rep (C_0^\univ (\widehat{\G} ),\cH )$
for the collection of nondegenerate representations of $C_0^\univ (\widehat{\G} )$ on $\cH$
(which is consistent with our notation in Section~\ref{S-wm C} for unital C$^*$-algebras).
In Proposition~5.1 of \cite{DawFimSkaWhi16} it is shown that the topology $\cT$ induced
on $\Rep (C_0^\univ (\widehat{\G} ),\cH )$ under the canonical bijection between 
$\Rep (\G ,\cH )$ and $\Rep (C_0^\univ (\widehat{\G} ),\cH )$ is the point-strict topology.
Since the strict topology and the $^*$-strong operator topology coincide on bounded
subsets of $\cB (\cH )$, this is the same as the point-$^*$-strong operator topology
on $\Rep (C_0^\univ (\widehat{\G} ),\cH )$. Since the $^*$-strong operator topology
and the strong operator topology agree on the unitary group of $\cB (\cH )$,
and a unital C$^*$-algebra is spanned by its unitaries,
we therefore have $\pi_n \to \pi$ in the topology $\cT$ on $\Rep (C_0^\univ (\widehat{\G} ),\cH )$ 
if and only if $\pi_n^+ \to \pi^+$ in the point-strong operator topology 
on $\Rep (C_0^\univ (\widehat{\G} )^+ ,\cH )$, where 
$\pi_n^+$ and $\pi^+$ are the canonical unital extensions of
$\pi_n$ and $\pi$ to the unitization $C_0^\univ (\widehat{\G} )^+$.
Combining these observations with Lemma \ref{L-G delta}, we obtain:

\begin{proposition}\label{P-WM G delta}
Let $\cH$ be a separable infinite-dimensional Hilbert space. 
Then $\WM (\G,\cH )$ is a $G_\delta$  in $\Rep (\G ,\cH )$.
\end{proposition}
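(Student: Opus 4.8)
The plan is to deduce this from Lemma~\ref{L-G delta} by transporting the problem along the canonical identifications that relate unitary representations of $\G$ with unital representations of $C_0^\univ (\widehat{\G})^+$. First I would introduce the map $\Phi \colon \Rep (\G ,\cH ) \to \Rep (C_0^\univ (\widehat{\G})^+ ,\cH )$ sending a unitary representation $U$ to the canonical unital extension $\pi_U^+$ of the associated representation $\pi_U$ of $C_0^\univ (\widehat{\G})$, and observe that $\Phi$ is continuous. This continuity is precisely what the discussion preceding the statement records: $U \mapsto \pi_U$ is a homeomorphism for the point-strict topology by Proposition~5.1 of \cite{DawFimSkaWhi16}, the strict and $^*$-strong operator topologies agree on bounded subsets of $\cB (\cH )$, the $^*$-strong and strong operator topologies agree on the unitary group of $\cB (\cH )$, and $C_0^\univ (\widehat{\G})^+$ is linearly spanned by its unitaries, so that $\pi_n \to \pi$ in $\cT$ is equivalent to $\pi_n^+ \to \pi^+$ in the point-strong operator topology.

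Next I would identify $\WM (\G ,\cH )$ as a preimage under $\Phi$. By the proposition stated just above together with Proposition~\ref{P-unitary} applied to the separable unital C$^*$-algebra $C_0^\univ (\widehat{\G})^+$, a representation $U$ lies in $\WM (\G ,\cH )$ if and only if the restriction of $\pi_U^+$ to the unitary group of $C_0^\univ (\widehat{\G})^+$ is weakly mixing, which in turn holds if and only if $\pi_U^+ \in \WM (C_0^\univ (\widehat{\G})^+ ,\cH )$. Hence $\WM (\G ,\cH ) = \Phi^{-1}\bigl( \WM (C_0^\univ (\widehat{\G})^+ ,\cH ) \bigr)$.

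Finally I would apply Lemma~\ref{L-G delta} with $A = C_0^\univ (\widehat{\G})^+$ and the separable Hilbert space $\cH$ to conclude that $\WM (C_0^\univ (\widehat{\G})^+ ,\cH )$ is a $G_\delta$ in $\Rep (C_0^\univ (\widehat{\G})^+ ,\cH )$, and then pull this back along the continuous map $\Phi$ to obtain that $\WM (\G ,\cH )$ is a $G_\delta$ in $\Rep (\G ,\cH )$. I do not expect a genuine obstacle here: the argument is essentially an assembly of facts already in place. The only point that needs a little care is checking that the chain of topological identifications — point-strict versus point-$^*$-strong on bounded sets, and the passage from $C_0^\univ (\widehat{\G})$ to its unitization — is internally consistent, but this has already been carried out in the paragraph preceding the statement, so the proof amounts to combining those observations with Lemma~\ref{L-G delta}.
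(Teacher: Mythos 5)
Your argument is correct and is essentially the paper's own proof: the paper likewise uses the homeomorphic identification of $\Rep (\G ,\cH )$ with $\Rep (C_0^\univ (\widehat{\G} ),\cH )$ (point-strict versus point-$^*$-strong on bounded sets, then point-strong after passing to the unital extensions), identifies $\WM (\G ,\cH )$ with the preimage of the weakly mixing unital representations via Proposition~\ref{P-unitary}, and applies Lemma~\ref{L-G delta}. No substantive differences to report.
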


Next we recall the definition of property (T).

\begin{definition}
Let $\G$ be a locally compact quantum group and let $U\in M(C_0 (\G )\otimes\cK (\cH ))$ 
be a unitary representation of $\G$.
A vector $\xi\in\cH$ is said to be {\it invariant} for $U$ if $U(\eta\otimes\xi ) = \eta\otimes\xi$
for all $\eta\in L^2 (\G)$.
We say that $U$ {\it has almost invariant vectors} if there is a net $\{ \xi_i \}_i$
of unit vectors in $\cH$ such that 
\[
\| U(\eta\otimes\xi_i ) - \eta\otimes\xi_i \| \to 0
\]
for all $\eta\in L^2 (\G )$, which by Proposition~3.7 of \cite{DawFimSkaWhi16} is equivalent to
\[
\| \pi_U (a)\xi_i - \hat\eps_{\rm{u}} (a)\xi_i \| \to 0
\]
for all $a\in C_0^{\univ} (\widehat{\G} )$.
\end{definition}

\begin{definition}
A locally compact quantum group $\G$ has {\it property (T)} if every unitary representation $\G$
having almost invariant vectors has a nonzero invariant vector.
\end{definition}

In order to establish the two main results of this section, Theorems~\ref{T-BV} and \ref{T-KP}, 
we require a characterization of property (T)
in terms of the isolation of finite-dimensional representations in the spectrum. 
Specifically, we will need the equivalence of (i) and (iv) in Theorem~\ref{T-fd isolated} below. 
For locally compact groups the equivalence between (i), (iii), and (iv) in Theorem~\ref{T-fd isolated}
is due to Wang \cite{Wan75}, and is known more generally for locally compact quantum groups
with trivial scaling group, although it does not seem to be explicitly stated 
in this generality in the literature 
(see Remark~5.3 of \cite{KyeSol12} and Section~3 of \cite{CheNg15}).
For quantum groups the idea is to adapt the argument of Bekka, de la Harpe, and Valette
for groups in Section~1.2 of \cite{BekHarVal08}. This requires Lemma~\ref{L-common subrep},
which generalizes a well known fact for locally compact groups.
The discrete case of Lemma~\ref{L-common subrep} appears in Section~2.5 of \cite{KyeSol12},
although the proof there works more generally, as observed in Proposition~7.2 of \cite{DasDaw16}. 
See also Section~3 of \cite{CheNg15} and the first paragraph of the proof of 
Proposition~3.5 in \cite{Vis16}. 

\begin{lemma}\label{L-common subrep}
Let $U$ and $V$ be finite-dimensional unitary representations of a 
second countable locally compact quantum group $\G$ with trivial scaling group.
Then $1_G \leq U\odot \bar{V}$ if and only if $U$ and $V$ contain a common nonzero subrepresentation.
\end{lemma}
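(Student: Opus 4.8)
The plan is to reduce this to the classical Frobenius reciprocity/Schur-orthogonality heuristic, translated into the language of $C_0^\univ(\widehat\G)$-representations and fixed vectors. Writing $\pi_U$ and $\pi_V$ for the corresponding representations of $C_0^\univ(\widehat\G)$ on the finite-dimensional spaces $\cH_U$ and $\cH_V$, the first move is to record that $1_G \le U\odot\bar V$ means precisely that the representation of $C_0^\univ(\widehat\G)$ associated with $U\odot\bar V$ has a nonzero invariant vector, i.e.\ a nonzero vector on which it acts by $\hat\eps_\univ$. Since $U\odot\bar V$ acts on $\cH_U\otimes\overbar{\cH_V}$, which we identify canonically with the Hilbert space $\HS(\cH_V,\cH_U)$ of Hilbert--Schmidt (here just linear, as the spaces are finite-dimensional) operators $T:\cH_V\to\cH_U$, the goal becomes: there is a nonzero such $T$ that is ``invariant'' in the appropriate sense, and this should force $T$ to be (a scalar times) a partial isometry intertwining a subrepresentation of $V$ with a subrepresentation of $U$.

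First I would make the identification $\cH_U\otimes\overbar{\cH_V}\cong \HS(\cH_V,\cH_U)$ explicit and compute how the representation $\pi_{U\odot\bar V} = (\pi_U\otimes\pi_{\bar V})\circ\sigma\hat\Delta_u$ acts under it. Using the description of $\bar V$ via $(R\otimes T)(V)$ and the behaviour of $\hat\Delta_u$, the action of a ``group-like'' element should come out to $T\mapsto \pi_U(\cdot)\,T\,\pi_V(\cdot)^*$-type conjugation; the key point is that the counit condition $\pi_{U\odot\bar V}(a)\,\text{vec}(T) = \hat\eps_\univ(a)\,\text{vec}(T)$ for all $a$ translates into $T$ lying in the intertwiner space $\mathrm{Hom}(\pi_V,\pi_U)$. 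This is the part where the hypothesis of trivial scaling group (and second countability, though that is minor here) is used: it guarantees that the relevant formulas relating $R$, the conjugate, and $\hat\Delta_u$ behave as in the classical case, and that $\bar V$ is genuinely the contragredient so that invariant vectors in $U\odot\bar V$ are exactly intertwiners $V\to U$. I would lean on the references cited just before the lemma (Section~2.5 of \cite{KyeSol12}, Proposition~7.2 of \cite{DasDaw16}, Section~3 of \cite{CheNg15}, Proposition~3.5 of \cite{Vis16}) for the precise form of these identities rather than rederiving them.

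Once the equivalence ``$1_G\le U\odot\bar V$ $\iff$ $\mathrm{Hom}(\pi_V,\pi_U)\ne 0$'' is in hand, the finite-dimensional linear algebra finishes it: a nonzero intertwiner $T:\pi_V\to\pi_U$ has polar decomposition $T = w|T|$, and since $|T|\in\pi_V(C_0^\univ(\widehat\G))'$ commutes with $\pi_V$, its nonzero spectral projections are subrepresentations of $V$; restricting to the range of one such projection and using that $w$ is then a unitary intertwiner from that subrepresentation of $V$ onto a subrepresentation of $U$, we get a common nonzero subrepresentation. Conversely, if $W_0\le U$ and $W_0\le V$ is a common nonzero subrepresentation, the corresponding projections furnish a nonzero partial isometry in $\mathrm{Hom}(\pi_V,\pi_U)$, hence $1_G\le U\odot\bar V$.

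I expect the main obstacle to be the bookkeeping in the first step: pinning down, with the conventions of the paper (leg notation, the transpose map $T$, $R$, the universal coproduct $\hat\Delta_u$ and the flip $\sigma$), the exact statement that invariant vectors for $U\odot\bar V$ correspond to elements of $\mathrm{Hom}(\pi_V,\pi_U)$, and checking that trivial scaling group is exactly what makes $\bar V$ play the role of the contragredient in this correspondence. The linear-algebra half (polar decomposition producing a common subrepresentation) is routine, as is the converse direction. So the proof will spend most of its length on the translation, after which the conclusion is immediate.
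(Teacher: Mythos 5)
Your outline is correct and follows essentially the same route as the paper, which does not prove this lemma itself but defers to the sources it cites (Kyed--So{\l}tan, Das--Daws, Chen--Ng, Viselter): identify $\cH_U\otimes\overbar{\cH_V}$ with the space of operators $\cH_V\to\cH_U$, show that invariant vectors of $U\odot\bar{V}$ are precisely the intertwiners from $V$ to $U$ (this is where triviality of the scaling group enters, since then $S=R$, so $\bar{V}=(\id\otimes T)(V^*)$ is the genuine contragredient and the correspondence is exact rather than twisted by positive ``$F$-matrices''), and then extract a common nonzero subrepresentation from a nonzero intertwiner via polar decomposition. The only caveat is that, exactly like the paper, you leave the computational verification of the invariant-vector/intertwiner correspondence to the cited references rather than carrying it out.
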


Armed with Lemma~\ref{L-common subrep}, one can now establish the following result
by repeating mutatis mutandis the argument in Section~1.2 of \cite{BekHarVal08},
as was done in Section~3 of \cite{CheNg15} in the discrete unimodular case.

\begin{theorem}\label{T-fd isolated}
For a second countable locally compact quantum group $\G$ with trivial scaling group
the following are equivalent:
\begin{enumerate}
\item $\G$ has property (T),

\item $\hat \eps_\univ$ is isolated in the spectrum of $C_0^{\univ} (\widehat{\G} )$,

\item every finite-dimensional representation in the spectrum of $C_0^{\univ} (\widehat{\G} )$ is isolated,

\item there exists a finite-dimensional representation in the spectrum of $C_0^{\univ} (\widehat{\G} )$
which is isolated.
\end{enumerate}
\end{theorem}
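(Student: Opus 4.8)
The plan is to prove the cycle of implications
$(\mathrm{i})\Rightarrow(\mathrm{ii})\Rightarrow(\mathrm{iii})\Rightarrow(\mathrm{iv})\Rightarrow(\mathrm{i})$, with the first and last being the substantive ones and the middle two essentially formal. For $(\mathrm{iii})\Rightarrow(\mathrm{iv})$ I would simply note that $\hat\eps_\univ$ is a one-dimensional (hence finite-dimensional) representation in the spectrum of $C_0^{\univ}(\widehat\G)$, so (iii) applied to it gives (iv); in fact (ii) already gives (iv) directly, so the only real content in the ``easy'' direction is $(\mathrm{i})\Rightarrow(\mathrm{ii})$ and $(\mathrm{iii})\Rightarrow$ the rest comes for free once (ii) is available. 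For $(\mathrm{i})\Rightarrow(\mathrm{ii})$: if $\hat\eps_\univ$ were not isolated, then it lies in the closure of its complement in $\widehat{C_0^{\univ}(\widehat\G)}$, and by second countability one can choose a sequence of irreducible representations $\pi_n\not\cong\hat\eps_\univ$ converging to $\hat\eps_\univ$; forming $\pi=\bigoplus_n\pi_n$ one gets a representation that weakly contains $\hat\eps_\univ$ — equivalently, the corresponding unitary representation of $\G$ has almost invariant vectors — but has no nonzero invariant vector (any invariant vector would force $\hat\eps_\univ$ to be a subrepresentation of some $\pi_n$, contradicting $\pi_n\not\cong\hat\eps_\univ$ and irreducibility), contradicting property (T). This is the standard ``no almost invariant vectors without invariant vectors $\Leftrightarrow$ trivial rep isolated'' argument, and the quantum-group dictionary between almost invariant vectors, weak containment of $\hat\eps_\univ$, and the point-strict topology on $\Rep(\G,\cH)$ is exactly what the preliminaries and the displayed equivalence in the definition of ``has almost invariant vectors'' provide.

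For $(\mathrm{ii})\Rightarrow(\mathrm{iii})$, suppose $\hat\eps_\univ$ is isolated and let $\sigma$ be any finite-dimensional representation in the spectrum; I want to show $\sigma$ is isolated. The idea is the classical one of Bekka--de la Harpe--Valette: if $\{\sigma_n\}$ is a sequence of irreducibles converging to $\sigma$, then $\sigma_n\odot\bar\sigma$ converges to $\sigma\odot\bar\sigma$, which contains $1_G$ as a subrepresentation by Lemma~\ref{L-common subrep} (since $\sigma$ shares the common subrepresentation $\sigma$ with itself). Hence $1_G$ — i.e.\ $\hat\eps_\univ$ at the level of $C_0^{\univ}(\widehat\G)$ — is weakly contained in $\bigoplus_n(\sigma_n\odot\bar\sigma)$, so by isolatedness of $\hat\eps_\univ$ it is actually a subrepresentation of some $\sigma_n\odot\bar\sigma$; applying Lemma~\ref{L-common subrep} in the other direction, $\sigma_n$ and $\sigma$ share a nonzero subrepresentation, and since $\sigma_n$ is irreducible this forces $\sigma_n$ to be (equivalent to) a subrepresentation of $\sigma$. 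As $\sigma$ is finite-dimensional it has only finitely many equivalence classes of subrepresentations, so $\{\sigma_n\}$ is eventually constant; by the usual argument with the Hausdorff-ness of the finite-dimensional part of the spectrum (every finite-dimensional point is closed, cf.\ Section~3.6 of \cite{Dix77}) this shows $\sigma$ cannot be a limit of distinct irreducibles, i.e.\ $\sigma$ is isolated.

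For the crucial implication $(\mathrm{iv})\Rightarrow(\mathrm{i})$, assume there is an isolated finite-dimensional $\sigma$ in the spectrum and let $U$ be a unitary representation of $\G$ with almost invariant vectors; I must produce a nonzero invariant vector. The strategy, again following Section~1.2 of \cite{BekHarVal08}, is to tensor: $U\odot\sigma\odot\bar\sigma$ also has almost invariant vectors (tensoring a representation with almost invariant vectors by any fixed representation $\sigma\odot\bar\sigma$, which itself contains $1_G$ by Lemma~\ref{L-common subrep}, preserves the property), so $1_G$ is weakly contained in $U\odot\sigma\odot\bar\sigma \cong (U\odot\sigma)\odot\bar\sigma$. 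One then shows $1_G\prec U\odot\sigma\odot\bar\sigma$ implies $1_G\prec U\odot(\bigoplus\text{of fin.\ dim.\ reps related to }\sigma)$ in a way that, using isolatedness of $\sigma$, upgrades to: $\sigma$ is a subrepresentation of $U\odot\sigma$, whence (by Lemma~\ref{L-common subrep} or a direct Frobenius-reciprocity computation, using that $\sigma\odot\bar\sigma\supseteq 1_G$) $1_G$ is a subrepresentation of $U$. The key leverage point is that weak containment of the \emph{isolated} representation $\sigma$ in a representation $\rho$ forces actual containment — this is precisely \cite{Wan75}, Theorem~1.7, already invoked in the proof of Theorem~\ref{T-closed} — converting the ``almost'' statement into an exact subrepresentation.

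\textbf{Main obstacle.} The delicate point is the passage, in $(\mathrm{iv})\Rightarrow(\mathrm{i})$, from ``$1_G$ is weakly contained in $U\odot\sigma\odot\bar\sigma$'' to ``$\sigma$ is weakly contained in $U\odot\sigma$'' and thence, via isolatedness, to an honest subrepresentation; this is where Lemma~\ref{L-common subrep} and the quantum-group tensor-product formula via $\hat\Delta_u$ must be combined carefully, and where the triviality of the scaling group is genuinely used (through Lemma~\ref{L-common subrep}, and because $\sigma\odot\bar\sigma\supseteq 1_G$ requires unimodularity-type hypotheses implicit in the conjugate-representation construction). In the group case this is a short Frobenius-reciprocity argument, but in the quantum setting one has to be attentive to the asymmetry between $U\odot\sigma$ and $\sigma\odot U$ and to the fact that the conjugate $\bar\sigma$ is only defined under unimodularity. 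Since the statement explicitly restricts to trivial scaling group (hence in particular the relevant finite-dimensional representations are well-behaved), the argument of \cite{BekHarVal08} transfers, and indeed the authors note this has effectively been carried out in \cite{CheNg15} in the discrete unimodular case; the present lemma packages the ingredient (Lemma~\ref{L-common subrep}) needed to run it in the stated generality.
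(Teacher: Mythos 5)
Your overall route---running the cycle (i)$\Rightarrow$(ii)$\Rightarrow$(iii)$\Rightarrow$(iv)$\Rightarrow$(i) by the tensor-product tricks of Section~1.2 of \cite{BekHarVal08}, with Lemma~\ref{L-common subrep} and Wang's isolation theorem \cite{Wan75} as the key inputs---is exactly the argument the paper has in mind (the paper itself only cites \cite{BekHarVal08} and \cite{CheNg15} ``mutatis mutandis''), and your (i)$\Rightarrow$(ii) is fine; in (ii)$\Rightarrow$(iii) note that $\sigma$, being a point of the spectrum, is irreducible, so once $\sigma_n$ is equivalent to a subrepresentation of $\sigma$ you get $\sigma_n\cong\sigma$ outright and no ``eventually constant'' discussion is needed, and note also that Lemma~\ref{L-common subrep} is stated only for two \emph{finite-dimensional} representations, whereas your $\sigma_n$ is not known to be finite-dimensional a priori, so that step needs either a one-sided strengthening of the lemma or a rearrangement of the argument.

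The genuine gap is the last step of your (iv)$\Rightarrow$(i): from ``$\sigma\le U\odot\sigma$'' you conclude ``$1_G\le U$'' by Frobenius reciprocity, ``using that $\sigma\odot\bar\sigma\supseteq 1_G$.'' That implication is false as a formal statement, already for groups: for $G=SU(2)$, $\sigma$ the two-dimensional representation and $U$ the three-dimensional adjoint representation, one has $\sigma\le U\otimes\sigma$ (since $U\le\sigma\otimes\bar\sigma$), yet $U$ has no nonzero invariant vector. What $\sigma\le U\odot\sigma$ actually yields is only that $U$ and the conjugate of the finite-dimensional representation $\sigma\odot\bar\sigma$ share a nonzero subrepresentation, i.e.\ $U$ has a nonzero finite-dimensional subrepresentation---enough for the Bekka--Valette-type statement (Theorem~\ref{T-BV}) but not for property (T). The missing idea is to prove (iv)$\Rightarrow$(ii) first: if $\hat\eps_\univ$ were not isolated, take irreducibles $\rho_n\neq\hat\eps_\univ$ with $\rho_n\to\hat\eps_\univ$; then $\rho_n\odot\sigma\to\sigma$, and isolation of $\sigma$ forces $\sigma\prec\rho_n\odot\sigma$ for $n$ in every tail, which (via the common-subrepresentation lemma, after tensoring with $\bar\sigma$) forces such $\rho_n$ to be finite-dimensional and equivalent to one of the finitely many irreducible constituents of a fixed finite-dimensional representation built from $\sigma$; these are closed points of the spectrum distinct from $\hat\eps_\univ$, contradicting $\rho_n\to\hat\eps_\univ$. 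Only then does one produce the invariant vector, by applying Wang's Theorem~1.7 (the Kazhdan projection) to the isolated \emph{one-dimensional} point $\hat\eps_\univ$ itself: almost invariant vectors give $\hat\eps_\univ\prec\pi_U$, and isolation of $\hat\eps_\univ$ upgrades this weak containment to honest containment. Your shortcut applies the isolation of $\sigma$ where the argument must use the (derived) isolation of the counit, and without that detour the step you flag as the ``main obstacle'' cannot be closed.
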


\begin{theorem}\label{T-BV}
A second countable locally compact quantum group $\G$ with trivial scaling group has property (T) 
if and only if every weakly mixing unitary representation of $\G$ fails to have almost invariant vectors.
\end{theorem}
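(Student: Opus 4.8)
The plan is to derive Theorem~\ref{T-BV} by combining the spectral dichotomy of Section~\ref{S-wm C} (Theorems~\ref{T-dense} and \ref{T-closed}) with the quantum-group form of Wang's theorem (Theorem~\ref{T-fd isolated}), applied to the separable unital C$^*$-algebra $A = C_0^\univ(\widehat{\G})^+$, the unitization of $C_0^\univ(\widehat{\G})$. First I would observe that the spectrum $\widehat{A}$ of this unitization is $\widehat{C_0^\univ(\widehat{\G})} \sqcup \{\chi_0\}$, where $\chi_0$ is the character killing $C_0^\univ(\widehat{\G})$; the point $\chi_0$ is always isolated (it is the unique one-dimensional representation not coming from $\widehat{C_0^\univ(\widehat{\G})}$, and it is open because $C_0^\univ(\widehat{\G})$ is an ideal whose hull is the complementary open set). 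Under the correspondence recalled in Section~\ref{S-preliminaries}, finite-dimensional subrepresentations of a unitary representation $U$ of $\G$ correspond to finite-dimensional subrepresentations of $\pi_U$, hence of $\pi_U^+$, so $\WM(\G,\cH)$ matches $\WM(A,\cH)$ under the canonical bijection $\Rep(\G,\cH) \cong \Rep(C_0^\univ(\widehat{\G}),\cH) = \Rep(A,\cH)$, and this bijection is a homeomorphism for the relevant topologies by the discussion preceding Proposition~\ref{P-WM G delta}.

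Next I would split into the two cases governed by Theorem~\ref{T-fd isolated}. Suppose $\G$ does not have property (T). By the equivalence of (i) and (iv) in Theorem~\ref{T-fd isolated}, no finite-dimensional representation in the spectrum of $C_0^\univ(\widehat{\G})$ is isolated. The added point $\chi_0 = \hat\eps_\univ^{\,+}$ \emph{is} isolated in $\widehat{A}$, but — and this is the subtle point — as a one-dimensional representation it is weakly mixing only when it is \emph{not} a subrepresentation of anything relevant; in fact $\chi_0$ itself, viewed as a representation of $A$, is finite-dimensional and isolated, so strictly speaking $\widehat{A}_\fin$ does contain an isolated point and Theorem~\ref{T-dense} does not literally apply to $A$. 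To get around this I would instead work with $C_0^\univ(\widehat{\G})$ directly using the adaptation of Theorem~\ref{T-dense} to not-necessarily-unital separable C$^*$-algebras — equivalently, I would apply Theorem~\ref{T-dense} to $A$ but restrict attention to the open-and-closed piece $\Rep$ corresponding to representations $\pi^+$ with $\pi$ nondegenerate on $C_0^\univ(\widehat{\G})$, on which $\WM(\G,\cH)$ lives; the hypothesis ``no point of $\widehat{C_0^\univ(\widehat{\G})}_\fin$ is isolated in $\widehat{C_0^\univ(\widehat{\G})}$'' is exactly what Theorem~\ref{T-fd isolated}(iv) fails gives us. I expect the cleanest route is to note that the proof of Theorem~\ref{T-dense} only uses unitality through Proposition~\ref{P-unitary} and Lemma~\ref{L-G delta}, which already have their nondegenerate analogues recorded in Section~\ref{S-wm repr}; so the density argument goes through verbatim for $\Rep(\G,\cH)$, giving that $\WM(\G,\cH)$ is dense $G_\delta$. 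In particular $\WM(\G,\cH) \neq \emptyset$, and moreover it is dense, so it cannot be contained in the set of representations without almost invariant vectors — indeed the trivial representation $1_\G^{\oplus\Nb}$ (or any representation with almost invariant vectors on $\cH$) lies in $\Rep(\G,\cH)$, and density of $\WM(\G,\cH)$ together with Lemma~\ref{L-prec} produces weakly mixing representations weakly containing it, hence having almost invariant vectors. This furnishes a weakly mixing representation of $\G$ with almost invariant vectors, proving one direction by contraposition.

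For the converse, suppose $\G$ \emph{does} have property (T). Then by Theorem~\ref{T-fd isolated} every finite-dimensional representation in the spectrum of $C_0^\univ(\widehat{\G})$ is isolated, and $\hat\eps_\univ$ is such a representation, so $\widehat{C_0^\univ(\widehat{\G})}_\fin$ is nonempty with all points isolated. Now Theorem~\ref{T-closed} (in its nondegenerate form, with the same caveat as above about passing through the unitization and the distinguished clopen piece) tells us that $\WM(\G,\cH)$ is closed and nowhere dense in $\Rep(\G,\cH)$; but what I actually need is simpler and is essentially the content of the second half of the proof of Theorem~\ref{T-closed}: a weakly mixing representation cannot have almost invariant vectors. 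Concretely, if $U$ is weakly mixing and has almost invariant vectors, then $\pi_U$ weakly contains $\hat\eps_\univ$, so $\hat\eps_\univ \prec \pi_U^{\oplus\Nb}$; since $\hat\eps_\univ$ is isolated in $\widehat{C_0^\univ(\widehat{\G})}$, Theorem~1.7 of \cite{Wan75} forces $\hat\eps_\univ$ to be a genuine subrepresentation of $\pi_U^{\oplus\Nb}$, and being one-dimensional it is then a subrepresentation of $\pi_U$ itself — contradicting weak mixing of $U$. Hence every weakly mixing unitary representation of $\G$ fails to have almost invariant vectors, completing the proof.

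\textbf{Main obstacle.} The routine calculations are not the difficulty; the delicate point is the bookkeeping around unitality. The results of Section~\ref{S-wm C} are stated for unital $A$, whereas $C_0^\univ(\widehat{\G})$ need not be unital, and its unitization introduces the spurious isolated one-dimensional point $\chi_0$ that would superficially trip both Theorem~\ref{T-dense} (which forbids \emph{any} isolated finite-dimensional point) and Theorem~\ref{T-closed} (whose conclusion would then be about the wrong space). The fix is to confine everything to the clopen subset of $\Rep(C_0^\univ(\widehat{\G})^+,\cH)$ consisting of extensions of nondegenerate representations of $C_0^\univ(\widehat{\G})$ — on this piece the relevant spectrum is genuinely $\widehat{C_0^\univ(\widehat{\G})}$, the topologies match by Proposition~5.1 of \cite{DawFimSkaWhi16}, and weak mixing is detected on the unitary group via Proposition~\ref{P-unitary} — and to check that the proofs of Theorems~\ref{T-dense} and \ref{T-closed} only ever use unitality through that unitary-group reformulation, so they transport without change. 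Once this is set up, Theorem~\ref{T-fd isolated} does the remaining work and the two implications fall out as above.
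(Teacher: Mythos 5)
Your overall strategy (reduce to the spectral results of Section~\ref{S-wm C} via Theorem~\ref{T-fd isolated}) is the paper's strategy, and your converse direction is correct (though heavier than necessary: if $\G$ has property (T), a weakly mixing representation with almost invariant vectors would have a nonzero invariant vector by the very definition of (T), hence a one-dimensional subrepresentation, contradicting weak mixing; no appeal to Wang's isolation theorem is needed). The genuine gap is in the forward direction, at the step ``$\WM(\G,\cH)$ is dense, so together with Lemma~\ref{L-prec} it produces weakly mixing representations weakly containing $1_\G^{\oplus\Nb}$.'' Lemma~\ref{L-prec} says $\pi\prec\rho$ iff $\pi$ lies in the closure of the unitary orbit of the \emph{single} representation $\rho$; density of the whole set $\WM(\G,\cH)$ only says that $1_\G^{\oplus\Nb}$ is a limit of weakly mixing representations that may vary with the finite set and tolerance, and a representation that is close to $1_\G^{\oplus\Nb}$ on one finite set of algebra elements need not have almost invariant vectors (that requires approximate invariance for \emph{all} finite sets simultaneously). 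Nor does ``dense'' by itself preclude $\WM(\G,\cH)$ from being contained in the set of representations without almost invariant vectors: two disjoint dense sets can coexist. The statement could be rescued by showing that the representations with almost invariant vectors form a dense $G_\delta$ and intersecting with the dense $G_\delta$ $\WM(\G,\cH)$ via Baire, but you did not argue this. The direct repair is exactly the paper's proof: under the failure of (T), Theorem~\ref{T-fd isolated} gives that no finite-dimensional point of the spectrum of $C_0^\univ(\widehat{\G})$ is isolated, and then Lemma~\ref{L-wm prec} applied to $\rho=\hat\eps_\univ$ produces a single weakly mixing $\theta$ with $\hat\eps_\univ\prec\theta$, which is precisely a weakly mixing unitary representation with almost invariant vectors; Theorem~\ref{T-dense} itself is only needed for Theorem~\ref{T-KP}.

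A secondary point: your claim that the character $\chi_0$ killing $C_0^\univ(\widehat{\G})$ is \emph{always} isolated in the spectrum of the unitization is false when $C_0^\univ(\widehat{\G})$ is nonunital. In that case $\bigcap_{\pi\in\widehat{B}}\ker_{B^+}\pi=\{0\}$ for $B=C_0^\univ(\widehat{\G})$, so $\widehat{B}$ is dense in $\widehat{B^+}$ and $\chi_0$ lies in its closure (compare $B=C_0(\Rb)$ or $B=\cK(\cH)$); $\chi_0$ is isolated exactly when $B$ is already unital, i.e., $\G$ is discrete, and then one should simply work with the unital algebra $B$ itself. Your ``clopen piece of nondegenerate extensions'' is also not obviously clopen. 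These bookkeeping issues do not sink the repaired argument, since Lemma~\ref{L-wm prec} applied to the counit can be arranged (by shrinking the neighbourhoods away from $\chi_0$) to produce representations that are nondegenerate on $C_0^\univ(\widehat{\G})$, but as written the unitality discussion both overstates the problem in the nonunital case and understates it for the topology matching.
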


\begin{proof}
For the nontrivial direction, if $\G$ does not have property (T)
then by Theorem~\ref{T-fd isolated} and Lemma~\ref{L-wm prec} there is a 
weakly mixing unitary representation of $\G$ with almost invariant vectors.
\end{proof}

\begin{theorem}\label{T-KP}
Let $\G$ be a second countable locally compact quantum group with trivial scaling group. 
Let $\cH$ be a separable infinite-dimensional Hilbert space.
If $\G$ does not have property (T) 
then $\WM (\G ,\cH )$ is a dense $G_\delta$ in $\Rep (\G ,\cH )$, while if $\G$ has property (T) 
then $\WM (\G ,\cH )$ is closed and nowhere dense in $\Rep (\G ,\cH )$.
\end{theorem}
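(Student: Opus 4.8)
The plan is to reduce Theorem~\ref{T-KP} to the two purely C$^*$-algebraic results of Section~\ref{S-wm C}, namely Theorems~\ref{T-dense} and \ref{T-closed}, applied to the separable unital C$^*$-algebra $A = C_0^\univ(\widehat{\G})^+$, the unitization of the universal dual algebra. First I would note that the point-strict topology on $\Rep(\G,\cH)$ transports, under the canonical bijection of Section~\ref{S-preliminaries} together with Proposition~5.1 of \cite{DawFimSkaWhi16} and the passage to unital extensions discussed before Proposition~\ref{P-WM G delta}, to exactly the point-strong operator topology on $\Rep(A,\cH)$; this identification is a homeomorphism. Under this same bijection, a unitary representation $U$ of $\G$ is weakly mixing precisely when the corresponding unital representation $\pi_U^+$ of $A$ is weakly mixing (Definition~\ref{D-wm quantum} together with the preservation of finite-dimensional subrepresentations under the correspondence, and Proposition~\ref{P-unitary}). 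Thus $\WM(\G,\cH)$ corresponds to $\WM(A,\cH)$ as a subset of homeomorphic spaces, and it suffices to understand when the hypotheses of Theorem~\ref{T-dense} or Theorem~\ref{T-closed} hold for $A$.

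The key point is then to translate the spectral condition on $\widehat{A}$ into a statement about property (T) of $\G$. Passing to the unitization adds a single isolated one-dimensional point (the character vanishing on $C_0^\univ(\widehat{\G})$) to the spectrum and otherwise leaves $\widehat{A}$ homeomorphic to the spectrum of $C_0^\univ(\widehat{\G})$, with finite-dimensional points going to finite-dimensional points; in particular $\widehat{A}_\fin$ is always nonempty. Now Theorem~\ref{T-fd isolated} gives, for $\G$ with trivial scaling group, that $\G$ has property (T) if and only if every finite-dimensional representation in the spectrum of $C_0^\univ(\widehat{\G})$ is isolated, equivalently (since the extra point from the unitization is automatically isolated) if and only if every point of $\widehat{A}_\fin$ is isolated in $\widehat{A}$. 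Likewise, $\G$ fails property (T) if and only if no point of $\widehat{A}_\fin$ is isolated --- here one must be slightly careful: the equivalence of (ii), (iii), (iv) in Theorem~\ref{T-fd isolated} shows that the failure of property (T) means that $\hat\eps_\univ$ is \emph{not} isolated, and moreover that \emph{no} finite-dimensional representation in the spectrum is isolated (if one were, (iv) would give property (T)); so in the non-(T) case indeed no point of $\widehat{A}_\fin$ is isolated in $\widehat{A}$ either. Hence: $\G$ does not have property (T) $\iff$ no point of $\widehat{A}_\fin$ is isolated in $\widehat{A}$, and $\G$ has property (T) $\iff$ $\widehat{A}_\fin\neq\emptyset$ and every point of $\widehat{A}_\fin$ is isolated in $\widehat{A}$ (the nonemptiness being automatic).

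With this dictionary in place the proof is immediate: if $\G$ does not have property (T), then no point of $\widehat{A}_\fin$ is isolated, so Theorem~\ref{T-dense} shows $\WM(A,\cH)$ is a dense $G_\delta$ in $\Rep(A,\cH)$, and transporting back along the homeomorphism gives that $\WM(\G,\cH)$ is a dense $G_\delta$ in $\Rep(\G,\cH)$; if $\G$ has property (T), then $\widehat{A}_\fin\neq\emptyset$ and each of its points is isolated in $\widehat{A}$, so Theorem~\ref{T-closed} shows $\WM(A,\cH)$ is closed and nowhere dense, and again we transport back. I expect the main obstacle to be bookkeeping the effect of the unitization on the spectrum --- specifically verifying carefully that adjoining a unit does not create or destroy isolation among the finite-dimensional points of interest and that the homeomorphism of representation spaces intertwines the two notions of weak mixing --- rather than anything conceptually deep; all the real work has been front-loaded into Theorems~\ref{T-dense}, \ref{T-closed}, and \ref{T-fd isolated}.

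\begin{proof}
Set $A = C_0^\univ (\widehat{\G} )^+$, the unitization of $C_0^\univ (\widehat{\G} )$, which is separable and unital.
By the discussion preceding Proposition~\ref{P-WM G delta}, the canonical bijection between $\Rep (\G ,\cH )$ and $\Rep (C_0^\univ (\widehat{\G} ),\cH )$, followed by the passage $\pi \mapsto \pi^+$ to unital extensions, is a homeomorphism of $\Rep (\G ,\cH )$ onto $\Rep (A,\cH )$ when the former carries the point-strict topology and the latter the point-strong operator topology. Under this homeomorphism $\WM (\G ,\cH )$ corresponds to $\WM (A,\cH )$: indeed finite-dimensionality of subrepresentations is preserved under the correspondence between unitary representations of $\G$ and nondegenerate representations of $C_0^\univ (\widehat{\G} )$, and by Proposition~\ref{P-unitary} a unital representation of $A$ is weakly mixing if and only if its restriction to the unitary group of $A$ is.

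The spectrum $\widehat{A}$ is obtained from the spectrum of $C_0^\univ (\widehat{\G} )$ by adjoining the one extra character that vanishes on $C_0^\univ (\widehat{\G} )$; this character is isolated in $\widehat{A}$, and the inclusion of the spectrum of $C_0^\univ (\widehat{\G} )$ into $\widehat{A}$ is a homeomorphism onto its (open) image, carrying finite-dimensional representations to finite-dimensional representations and preserving isolation within that image. In particular $\widehat{A}_\fin \neq \emptyset$.

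Suppose first that $\G$ does not have property (T). By the equivalence of (i) and (iv) in Theorem~\ref{T-fd isolated}, no finite-dimensional representation in the spectrum of $C_0^\univ (\widehat{\G} )$ is isolated. Combined with the previous paragraph, this shows that no point of $\widehat{A}_\fin$ is isolated in $\widehat{A}$. Theorem~\ref{T-dense} then gives that $\WM (A,\cH )$ is a dense $G_\delta$ in $\Rep (A,\cH )$, and transporting along the homeomorphism above shows that $\WM (\G ,\cH )$ is a dense $G_\delta$ in $\Rep (\G ,\cH )$.

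Suppose now that $\G$ has property (T). By the equivalence of (i) and (iii) in Theorem~\ref{T-fd isolated}, every finite-dimensional representation in the spectrum of $C_0^\univ (\widehat{\G} )$ is isolated. Together with the isolation of the extra character, this shows that $\widehat{A}_\fin \neq \emptyset$ and that every point of $\widehat{A}_\fin$ is isolated in $\widehat{A}$. Theorem~\ref{T-closed} then gives that $\WM (A,\cH )$ is closed and nowhere dense in $\Rep (A,\cH )$, and transporting along the homeomorphism shows that $\WM (\G ,\cH )$ is closed and nowhere dense in $\Rep (\G ,\cH )$.
\end{proof}
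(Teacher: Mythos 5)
Your overall strategy is exactly the paper's: its proof of Theorem~\ref{T-KP} is literally ``apply Theorem~\ref{T-fd isolated} in conjunction with Theorems~\ref{T-dense} and \ref{T-closed}''. The trouble lies in the unitization bookkeeping, which you yourself flag as the point needing care and which, as written, is wrong. Write $B = C_0^\univ(\widehat{\G})$ and $A = B^+$, and let $\chi_0$ denote the adjoined character vanishing on $B$. Then $\chi_0$ is isolated in $\widehat{A}$ if and only if $B$ is unital (i.e.\ $\G$ is discrete): when $B$ is non-unital it is an \emph{essential} ideal in $A$, so the intersection of the kernels of the unital extensions of the irreducible representations of $B$ is zero, hence $\widehat{B}$ is dense in $\widehat{A}$ and $\chi_0$ is not isolated. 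Whichever way this goes, one half of your case analysis collapses. If $\G$ has property (T) but is not discrete (e.g.\ $\G = \SL(3,\Rb)$, which has trivial scaling group), then $\chi_0$ is a non-isolated point of $\widehat{A}_\fin$, so the hypothesis of Theorem~\ref{T-closed} fails for $A$ and your second case is unjustified. If $\G$ is discrete and fails property (T), then $\chi_0$ really is isolated, so $\widehat{A}_\fin$ contains an isolated point and Theorem~\ref{T-dense} does not apply to $A = B^+$ (there the cure is simply not to unitize and take $A = B$); note also that your first case is internally inconsistent, since you assert both that $\chi_0$ is isolated and that no point of $\widehat{A}_\fin$ is isolated.

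A second, related gap is the claim that $\Rep(\G,\cH)$ is carried homeomorphically \emph{onto} $\Rep(A,\cH)$. It is not: a unital representation of $B^+$ need not restrict nondegenerately to $B$ (already $b+\lambda 1\mapsto \lambda 1_{\cH}$ fails this), so $\Rep(\G,\cH)$ corresponds only to the proper subset of unital representations whose restriction to $B$ is nondegenerate. Relative closedness passes to a subspace, but density and nowhere density do not transfer automatically, so ``transporting along the homeomorphism'' is not available as stated. To repair the reduction one must keep the adjoined character out of the picture and check that the approximating representations produced in Theorems~\ref{T-dense} and \ref{T-closed} can be chosen in the nondegenerate class: for instance, work with $\widehat{B}$, which sits as an open subset of $\widehat{A}$ (so isolation of its finite-dimensional points is unambiguous and is exactly what Theorem~\ref{T-fd isolated} controls), choose the neighbourhoods $U_n$ in Lemma~\ref{L-wm prec} inside $\widehat{B}$, and observe that subrepresentations of representations nondegenerate on $B$, the direct integrals constructed there, and the finite-dimensional summand appended in Theorem~\ref{T-closed} (e.g.\ the counit) are all nondegenerate on $B$; equivalently, restate the two C$^*$-algebraic theorems for nondegenerate representations of a separable, possibly non-unital C$^*$-algebra. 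The paper's own one-line proof is silent on these points as well, but since your write-up makes the reduction explicit, these are the steps that must be fixed for it to be a proof.
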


\begin{proof}
Apply Theorem~\ref{T-fd isolated} in conjunction with Theorems~\ref{T-dense} and \ref{T-closed}.
\end{proof}

\section{Property (T) and strongly ergodic actions}\label{S-wm actions}

We establish in Theorem~\ref{T-CW} a quantum group version of a 
result of Connes and Weiss \cite{ConWei80} for countable discrete groups.
It was verified by Daws, Skalski, and Viselter under the additional hypothesis
that the quantum group is discrete and has low dual (\cite{DawSkaVis16}, Theorem~9.3). 
In fact to obtain the conclusion we can simply apply the argument of Daws, Skalsi, and Viselter
by replacing their Theorem~7.3 with our Theorem~\ref{T-BV}, or rather a slight strengthening
of the latter in line with Remark~7.4 of \cite{DawSkaVis16}, as we explain below.

An {\it n.s.p.\ (normal-state-preserving) action} $\G\curvearrowright^\alpha (N,\sigma )$
is a normal injective unital $^*$-homomorphism 
$\alpha : N\to L^\infty (\G )\overbar{\otimes} N$,
where $N$ is a von Neumann algebra with a faithful normal state $\sigma$,
such that $(\id \otimes \alpha)\alpha = (\Delta \otimes \id)\alpha$ 
and $(\id\otimes\sigma )\alpha (x) = \sigma (x)1$ for all $x\in N$. 
We drop the symbol $\alpha$ if we don't need to refer to it explicitly.

Let $\G\curvearrowright^\alpha (N,\sigma )$ be an n.s.p.\ action. A bounded
net $\{ x_i \}\subset N$ in said to be {\it asymptotically invariant} 
if for every normal state $\omega \in L^1(\G)$ we have
$(\omega\otimes\id )(\alpha (x_i )) - x_i \to 0$ strongly, and {\it trivial}
if $x_i - \sigma (x_i )1 \to 0$ strongly. The action is said to be {\it strongly ergodic}
if every asymptotically invariant net is trivial.

Recall that $R$ denotes the unitary antipode of $\G$, which acts on $\cB (L^2 (\G ))$
as $R(x) = J_R x^* J_R$ where $J_R$ is the modular conjugation associated to the
left Haar weight on $L^\infty (\G )$. 
We say that a unitary representation $U \in M(C_0(\G) \otimes \cK(\cH))$ of $\G$ 
is {\it self-conjugate} (referred to as condition $\cR$ in \cite{DawSkaVis16})
if there exists an anti-unitary operator $J:\cH\to\cH$ such that
the anti-isomorphism $j : \cB (\cH ) \to \cB (\cH )$ given by $x\mapsto Jx^* J^*$
satisfies $(R\otimes j) (U) = U$.

Let $U$ be a unitary representation of $\G$ on a Hilbert space $\cH$.
Recall that the conjugate representation $\overline{U}$ on the conjugate Hilbert space 
$\overline{\cH}$ is defined as $(R\otimes T)(U)$ where $T:\cB (\cH )\to \cB (\overline{\cH} )$ 
is the map given by $T(b)\overline{\xi} = \overline{b^* \xi}$.
Let $V = U \odot \overline{U}$ be the tensor product of $U$ and $\overline{U}$.
Write $J$ for the anti-unitary operator on $\cH\otimes\overline{\cH}$ given on
elementary tensors by $J(\xi\otimes\overline{\zeta} ) = \zeta\otimes\overline{\xi}$.
and let $j : \cB (\cH ) \to \cB (\cH )$ be the anti-isomorphism given by $x\mapsto Jx^* J^*$.
Let $y = \sum_{i\in I} a_i\otimes b_i$ be a finite sum of elementary tensors in $L^\infty (\G )\otimes\cB (\cH )$.  Since $R(a_iR(a_j)) = a_jR(a_i)$ and $j(b_i\otimes T(b_j)) = b_j\otimes T(b_i)$ for all $i,j \in I$, it follows that the element  
\begin{align*}
x = y_{12}[(R \otimes T)y]_{13} = \sum_{i,j \in I}a_iR(a_j) \otimes b_i \otimes T(b_j) \in L^\infty(\G) \otimes \cB(\cH) \otimes \cB(\overline{\cH})
\end{align*}
satisfies
\begin{align*}
(R\otimes j)(x) = x .
\end{align*}
Since the maps $R\otimes j$ and $R\otimes T$ are $^*$-strongly continuous,
multiplication is $^*$-strongly continuous on the unit ball of 
$\cB (L^2 (\G )\otimes\cH\otimes\overline{\cH} )$, and $U$ is a $^*$-strong 
limit of operators of norm at most one in $L^\infty (\G) \otimes \cB (\cH )$ by Kaplansky density,
we conclude that $(R\otimes j)(V) = V$, so that $V$ is self-conjugate.

Now if $U$ has almost invariant vectors then so does $V$, as is easily seen,
and if $U$ is weakly mixing and $\G$ has trivial scaling group then $V$
is weakly mixing by Theorem~3.11 of \cite{Vis16}.
It thus follows from Theorem~\ref{T-BV} that if $\G$ has trivial scaling group
and does not have property (T) then it admits a weakly mixing self-conjugate 
unitary representation with almost invariant vectors.

This extra self-conjugacy condition is needed in the 
argument of Daws--Skalski--Viselter, who apply it
in Lemma~9.2 of \cite{DawSkaVis16} so as to permit
the use of Vaes's construction of actions on the free Araki--Wood factors from \cite{Vae05}.
Now that we have also have it in our more general setting, 
we can apply the argument in Section~9 of \cite{DawSkaVis16} to deduce the following theorem.
Note that the assumption of trivial scaling group is not merely required for the application
of Theorem~3.11 of \cite{Vis16} in the previous paragraph, but is also 
a hypothesis in Lemma~9.2 of \cite{DawSkaVis16}. Here $(\cL F_\infty , \tau )$
is the von Neumann algebra of the free group on a countably infinite set of generators
along with its canonical tracial state.

\begin{theorem}\label{T-CW}
For a second countable locally compact quantum group $\G$ with trivial scaling group
the following are equivalent:
\begin{enumerate}
\item $\G$ has property (T),

\item every weakly mixing n.s.p.\ action $\G\curvearrowright (N,\sigma )$
is strongly ergodic,

\item every ergodic n.s.p.\ action $\G\curvearrowright (N,\sigma )$
is strongly ergodic.
\end{enumerate}
One can also replace $(N,\sigma )$ with the fixed pair $(\cL F_\infty , \tau )$ in (ii) and (iii).
\end{theorem}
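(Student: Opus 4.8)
The plan is to obtain Theorem~\ref{T-CW} by transporting the argument of Section~9 of \cite{DawSkaVis16} into the present generality, using Theorem~\ref{T-BV} (or rather the self-conjugate strengthening established in the paragraphs above) in place of their Theorem~7.3. The implications (ii)$\Rightarrow$(iii) is trivial, since an ergodic n.s.p.\ action is in particular weakly mixing (it has no nonzero finite-dimensional invariant subspace for the associated Koopman-type representation), and both can be read off from the corresponding statement for representations. The substantive content is in (i)$\Rightarrow$(ii) and in $\neg$(i)$\Rightarrow\neg$(iii), the latter being where the free Araki--Woods factor construction enters.

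For (i)$\Rightarrow$(ii): suppose $\G$ has property (T) and $\G\curvearrowright^\alpha (N,\sigma)$ is a weakly mixing n.s.p.\ action with an asymptotically invariant net $\{x_i\}$. First I would pass to the GNS Hilbert space $L^2(N,\sigma)$ and build the Koopman-type unitary representation $U_\alpha$ of $\G$ implementing $\alpha$; the orthogonal complement of the constants carries a representation $U_\alpha^\circ$ which is weakly mixing precisely because the action is weakly mixing. An asymptotically invariant net that is not trivial produces, after subtracting scalars and normalizing, a net of unit vectors in this complement that is almost invariant for $U_\alpha^\circ$. Since $\G$ has property (T), $U_\alpha^\circ$ then has a nonzero invariant vector, contradicting weak mixing. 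This is essentially the quantum-group form of the standard "property (T) $\Rightarrow$ strong ergodicity" argument and is the routine direction; I would cite the relevant GNS/Koopman bookkeeping from \cite{DawSkaVis16} rather than reprove it.

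For $\neg$(i)$\Rightarrow\neg$(iii): this is the Connes--Weiss half and the real work. Assuming $\G$ does not have property (T), the paragraphs preceding the theorem already supply a weakly mixing \emph{self-conjugate} unitary representation $U$ of $\G$ with almost invariant vectors (this is where trivial scaling group is used, both for Theorem~3.11 of \cite{Vis16} and as a hypothesis of Lemma~9.2 of \cite{DawSkaVis16}). I would then feed $U$ into Vaes's construction from \cite{Vae05} of an action of $\G$ on a free Araki--Woods factor: the self-conjugacy (condition $\cR$) is exactly what allows one to form a gaussian-type second-quantization action $\G\curvearrowright (N,\sigma)$ out of $U$, as carried out in Lemma~9.2 of \cite{DawSkaVis16}. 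The almost invariant vectors for $U$ become a nontrivial asymptotically invariant net for this action (the vacuum-subtracted first-level vectors), so the action is not strongly ergodic; and weak mixing of $U$ forces ergodicity (indeed weak mixing) of the action at the level of the Koopman representation on Fock space. This yields a non--strongly-ergodic ergodic n.s.p.\ action, completing $\neg$(iii). For the final sentence, I would note that the action produced is on a free Araki--Woods factor which, after the standard identification (again as in Section~9 of \cite{DawSkaVis16}), can be taken with underlying von Neumann algebra $(\cL F_\infty,\tau)$, so both (ii) and (iii) may be stated with this fixed pair. The main obstacle is marshalling the Fock-space construction of \cite{Vae05,Vae05} correctly in the locally compact (rather than discrete low-dual) setting — verifying that self-conjugacy of $U$ is preserved through the construction and that ergodicity/strong-failure-of-ergodicity transfer as claimed — but since the self-conjugate weakly mixing representation with almost invariant vectors has already been produced and trivial scaling group is assumed throughout, this reduces to checking that the cited lemmas of \cite{DawSkaVis16} and \cite{Vae05} were never actually using low dual, which is the content of the remarks (Remark~7.4 and the discussion around Lemma~9.2) already flagged in the excerpt.
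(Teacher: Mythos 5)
Your overall strategy is exactly the paper's: the paper proves Theorem~\ref{T-CW} by producing (in the paragraphs preceding it) a weakly mixing \emph{self-conjugate} representation with almost invariant vectors whenever property (T) fails, and then transporting the argument of Section~9 of \cite{DawSkaVis16} (Lemma~9.2, Vaes's free Araki--Woods construction, with trivial scaling group as a standing hypothesis) verbatim, with their Theorem~7.3 replaced by the strengthened Theorem~\ref{T-BV}. Your treatment of the two substantive directions and of the $(\cL F_\infty,\tau)$ refinement follows that same route.

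However, there is a genuine slip in how you close the cycle of implications. You assert that (ii)$\Rightarrow$(iii) is trivial ``since an ergodic n.s.p.\ action is in particular weakly mixing.'' The containment goes the other way: weak mixing implies ergodicity (an irrational rotation is ergodic but not weakly mixing already for groups), so the trivial implication is (iii)$\Rightarrow$(ii), and (ii)$\Rightarrow$(iii) is \emph{not} immediate. As written you establish (i)$\Rightarrow$(ii) and $\neg$(i)$\Rightarrow\neg$(iii) (and, implicitly, $\neg$(i)$\Rightarrow\neg$(ii), since the action built from the self-conjugate representation is weakly mixing), which together do not yield (i)$\Rightarrow$(iii); the false ``trivial'' step is what you were relying on to bring (iii) into the loop. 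The repair is easy and is how the Daws--Skalski--Viselter argument is organized: prove (i)$\Rightarrow$(iii) directly by the same routine Koopman argument you sketch for (i)$\Rightarrow$(ii) (a nontrivial asymptotically invariant net gives almost invariant vectors in the orthocomplement of the constants, and property (T) then produces an invariant vector there, contradicting \emph{ergodicity} rather than weak mixing), note (iii)$\Rightarrow$(ii) trivially because weakly mixing actions are ergodic, and close with (ii)$\Rightarrow$(i) via the free Araki--Woods action, which is weakly mixing and not strongly ergodic. With that reassembly your proposal coincides with the paper's proof.
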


\section{The general nonunimodular case}\label{S-nonunimodular}

In this final section, our aim is to prove Theorem~\ref{T-BV nonunimodular}, which 
extends Theorems~\ref{T-BV} and \ref{T-KP} so as to cover the general nonunimodular case.
It shows in particular that a nonunimodular second countable locally compact quantum group
cannot have property (T), which in the discrete situation was established in \cite{Fim10}.

Let $\G$ be a second countable locally compact quantum group.  
Recall that the modular element of $\G$ is strictly positive unbounded operator 
$\delta$ on $L^2 (\G )$ affiliated with the von Neumann algebra $L^\infty (\G )$. 
The  map $t\mapsto \delta^{it}$ 
from $\Rb$ to $L^\infty(\G) \subseteq \cB (L^2 (\G ))$ is continuous in the strong operator topology 
by Stone's theorem and so it defines a unitary operator $U$ on 
$L^2 (\Rb , L^2 (\G ))\cong L^2 (\Rb )\otimes L^2 (\G )$
which, when viewing $L^2 (\Rb , L^2 (\G ))$ as the Hilbert space direct integral 
of copies of $L^2 (\G )$ over $\Rb$ with respect to Lebesgue measure, is a decomposable
operator and as such is expressed by the direct integral $\int^\oplus_\Rb \delta^{it} \, dt$.
Thus for all $\eta_1 ,\eta_2 \in L^2 (\G )$ and $\xi_1 ,\xi_2 \in L^2 (\Rb )$ we have
\begin{align}\label{E-inner product}
\langle U(\eta_1 \otimes\xi_1 ), \eta_2 \otimes\xi_2 \rangle
&= \int_\Rb \langle \xi_1 (t) \delta^{it} \eta_1 , \xi_2 (t) \eta_2 \rangle \, dt \\
&= \int_\Rb \xi_1 (t)\overline{\xi_2 (t)} \langle \delta^{it} \eta_1 , \eta_2 \rangle \, dt \notag .
\end{align}
Since $\Delta (\delta^{it} ) = \delta^{it} \otimes\delta^{it}$ 
(see for example the proof of Proposition 1.9.11 in \cite{Vae01}),
we see that $(\Delta\otimes\id )U$ and $U_{13} U_{12}$ are both decomposable operators on 
$L^2 (\Rb , L^2 (\G ) \otimes L^2 (\G )) \cong L^2 (\G ) \otimes L^2 (\G )\otimes L^2 (\Rb )$
which can be expressed as the direct integral $\int^\oplus_\Rb \delta^{it} \otimes\delta^{it} \, dt$.
Thus $U$ is a unitary representation of $\G$ on $L^2 (\Rb )$.  
(In fact, $U \in L^\infty(\G) \overline{\otimes} L^\infty(\mathbb R)$ 
can be regarded as a unitary representation of both $\G$ and $\mathbb R$ simultaneously.)

\begin{lemma}\label{L-nonunimodular}
The unitary representation $U$ of $\G$ has almost invariant vectors.
\end{lemma}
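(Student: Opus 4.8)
The plan is to exhibit an explicit net of almost invariant unit vectors for $U$ inside $L^2(\Rb)$, using only the strong continuity of $t\mapsto\delta^{it}$ at $t=0$ together with the inner product formula \eqref{E-inner product}.

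The first step is to establish the identity
\[
\|U(\eta\otimes\xi) - \eta\otimes\xi\|^2 \;=\; \int_\Rb |\xi(t)|^2\,\|\delta^{it}\eta-\eta\|^2\,dt
\]
valid for every $\eta\in L^2(\G)$ and every unit vector $\xi\in L^2(\Rb)$. To see this, expand the left-hand side as $\|U(\eta\otimes\xi)\|^2 - 2\re\langle U(\eta\otimes\xi),\eta\otimes\xi\rangle + \|\eta\otimes\xi\|^2$, note that $\|U(\eta\otimes\xi)\|^2 = \|\eta\otimes\xi\|^2 = \|\eta\|^2$ by unitarity of $U$ and the normalization $\|\xi\|=1$, and use \eqref{E-inner product} with $\eta_1=\eta_2=\eta$ and $\xi_1=\xi_2=\xi$ to write $\langle U(\eta\otimes\xi),\eta\otimes\xi\rangle = \int_\Rb |\xi(t)|^2\langle\delta^{it}\eta,\eta\rangle\,dt$. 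Since $\int_\Rb|\xi(t)|^2\,dt=1$ and $\delta^{it}$ is unitary, one has $2\|\eta\|^2 - 2\re\langle\delta^{it}\eta,\eta\rangle = \|\delta^{it}\eta-\eta\|^2$ pointwise in $t$, and assembling the pieces yields the displayed identity.

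Next, for each $a>0$ put $\xi_a = (2a)^{-1/2}\unit_{(-a,a)}$, which is a unit vector in $L^2(\Rb)$, and direct the index set $(0,\infty)$ by decreasing $a$. By Stone's theorem the map $t\mapsto\delta^{it}\eta$ is norm continuous for every fixed $\eta\in L^2(\G)$, so $t\mapsto\|\delta^{it}\eta-\eta\|^2$ is continuous and vanishes at $t=0$; consequently, by the identity above,
\[
\|U(\eta\otimes\xi_a)-\eta\otimes\xi_a\|^2 \;=\; \frac{1}{2a}\int_{-a}^{a}\|\delta^{it}\eta-\eta\|^2\,dt \;\longrightarrow\; 0 \qquad (a\to 0^+)
\]
for every $\eta\in L^2(\G)$. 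Hence $\{\xi_a\}_{a>0}$ is a net of almost invariant vectors for $U$.

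There is no genuine obstacle in this argument; the one place demanding a little care is the bookkeeping in the identity above (keeping straight the unitarity of both $U$ and $\delta^{it}$ and the normalization $\|\xi\|=1$), after which the conclusion drops out immediately from the continuity of the one-parameter unitary group $\{\delta^{it}\}_{t\in\Rb}$ at the identity. It is worth remarking that $U$ has no nonzero invariant vector unless $\delta=1$, which is the point that will be exploited in the nonunimodular case.
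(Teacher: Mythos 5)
Your proof is correct and takes essentially the same route as the paper: both use the inner product formula \eqref{E-inner product} with normalized indicator functions of intervals shrinking to $0$ and conclude from the strong continuity of $t\mapsto\delta^{it}$ at $t=0$. The only difference is cosmetic — you compute $\|U(\eta\otimes\xi_a)-\eta\otimes\xi_a\|^2$ exactly via your averaging identity, while the paper estimates $|\langle U(\eta\otimes\xi_n),\eta\otimes\xi_n\rangle-1|$ and implicitly passes to the norm.
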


\begin{proof}
For every $n\in\Nb$, writing $\unit_{[0,\frac1n ]}$ for the indicator function
of $[0,\frac1n ]$ we set $\xi_n = \sqrt{n} \unit_{[0,\frac1n ]}$, 
which is a unit vector in $L^2 (\Rb )$.
Let $\eta\in L^2 (\G )$ be a unit vector.
Using the formula (\ref{E-inner product}), for every $n$ we have
\begin{align*}
|\langle U(\eta \otimes\xi_n ), \eta \otimes\xi_n \rangle - 1 |
= n \int_{[0,\frac1n ]} \langle (\delta^{it} - 1)\eta , \eta \rangle \, dt 
\end{align*}
and the expression on the right converges to zero as $n\to\infty$ by the strong operator continuity
of the map $t\mapsto \delta^{it}$. Thus $U$ has almost invariant vectors.
\end{proof}

Let $V$ be any unitary representation of $\G$ on a separable Hilbert space $\cH$,
and consider the tensor product representation $Z = V \odot U = V_{12}U_{13}$.
Viewing $L^2 (\Rb , L^2 (\G )\otimes\cH) \cong L^2 (\G ) \otimes \cH\otimes L^2 (\Rb )$ 
as the Hilbert space direct integral of copies of $L^2 (\G )\otimes\cH$ over $\Rb$ 
with respect to Lebesgue measure, the operator $V_{12}$ is decomposable and can
expressed by the direct integral $\int^\oplus_\Rb V \, dt$,
so that for $\eta\in L^2 (\G )$ and $\zeta\in L^2 (\Rb ,\cH ) \cong \cH\otimes L^2 (\Rb )$
the vector $V_{12}^* (\eta\otimes\zeta )$, viewed as an element of $L^2 (\Rb ,L^2 (\G ) \otimes \cH )$,
is equal to $t\mapsto V^* (\eta\otimes\zeta (t))$.
Thus for $\eta_1 , \eta_2 \in L^2 (\G )$ and 
$\zeta_1 , \zeta_2 \in L^2 (\Rb ,\cH ) \cong \cH\otimes L^2 (\Rb )$ we have the formula
\begin{align}\label{E-inner product tensor}
\langle Z(\eta_1 \otimes\zeta_1 ) , \eta_2 \otimes\zeta_2 \rangle
&= \langle U_{13}(\eta_1 \otimes\zeta_1 \rangle ) , V_{12}^* (\eta_2 \otimes\zeta_2 \rangle ) \\
&= \int_\Rb \langle \delta^{it} \eta_1 \otimes \zeta_1 (t) , 
V^* (\eta_2 \otimes \zeta_2 (t) )\rangle \, dt . \notag
\end{align}

\begin{lemma}\label{L-nonunimodular tensor}
Suppose that $\G$ is not unimodular.
Let $V$ be any unitary representation of $\G$ on a separable Hilbert space $\cH$.
Then the tensor product representation $Z = V \odot U$ is weakly mixing. 
Moreover, if $\cH$ is infinite-dimensional
then the closure of the set of unitary conjugates of $Z$ in $\Rep (\G ,\cH )$ contains $V$.
\end{lemma}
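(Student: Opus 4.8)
The plan is to prove the two assertions separately, as they rely on rather different mechanisms.

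For the weak mixing of $Z = V \odot U$: I would argue by contradiction. Suppose $Z$ has a nonzero finite-dimensional subrepresentation sitting on a closed subspace $\cK_0 \subseteq \cH \otimes L^2(\Rb)$ with orthogonal projection $P$ commuting with $Z$. The key point is that $U$, regarded simultaneously as a representation of $\G$ and of $\Rb$, is built from the modular element $\delta$, which is \emph{unbounded} precisely because $\G$ is not unimodular. Concretely, $Z$ restricted to the $\Rb$-factor is $\id_\cH \otimes U$ (as a representation of $\Rb$), since $V_{12}$ does not involve the $L^2(\Rb)$-leg. So any $Z$-invariant finite-dimensional subspace would in particular be invariant under the one-parameter unitary group $\id_\cH \otimes \int^\oplus_\Rb \delta^{it}\, dt$ on $\cH \otimes L^2(\Rb)$. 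But this one-parameter group, viewed through its Stone generator, has purely continuous (in fact Lebesgue-type) spectrum because $\delta$ has nontrivial spectrum away from a single point: the spectral measure of $t\mapsto \delta^{it}$ on $L^2(\Rb)$ has no atoms. A finite-dimensional invariant subspace of a continuous one-parameter unitary group must decompose into joint eigenspaces, forcing atoms in the spectral measure — contradiction. I would phrase this using the formula (\ref{E-inner product tensor}): for vectors in $\cK_0$ the inner products $\langle Z(\eta_1\otimes\zeta_1),\eta_2\otimes\zeta_2\rangle$ would have to exhibit almost-periodic behaviour in a way incompatible with the integral against $\xi_1(t)\overline{\xi_2(t)}\langle\delta^{it}\eta_1,\eta_2\rangle$ when $\delta\neq 1$.

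For the second assertion — that when $\cH$ is infinite-dimensional, $V$ lies in the closure of the unitary-conjugacy orbit of $Z$ in $\Rep(\G,\cH)$ — the mechanism is the ``absorption'' principle from the introduction: since $U$ has almost invariant vectors (Lemma~\ref{L-nonunimodular}), tensoring with $U$ locally looks like tensoring with the trivial representation, so $V \odot U$ is point-strictly close to $V \odot 1_\G \cong V^{\oplus\infty}$; and since $\cH$ is separable infinite-dimensional, $V^{\oplus\infty}$ is unitarily equivalent to a representation on $\cH$ that has $V$ as a ``corner.'' More precisely: fix a finite set $\Omega \subseteq C_0^\univ(\widehat\G)$, an orthonormal set $\xi_1,\dots,\xi_n \in \cH$, and $\eps > 0$. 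Using Lemma~\ref{L-nonunimodular}, pick a unit vector $\xi_n \in L^2(\Rb)$ with $\|\pi_U(a)\xi_n - \hat\eps_\univ(a)\xi_n\| < \eps$ for all $a\in\Omega$ (the equivalent reformulation of almost invariance recorded in the definition before Theorem~\ref{T-BV nonunimodular}). Then the vectors $\xi_i \otimes \xi_n \in \cH\otimes L^2(\Rb)$ are orthonormal and satisfy $|\langle \pi_Z(a)(\xi_i\otimes\xi_n),\xi_i\otimes\xi_n\rangle - \langle\pi_V(a)\xi_i,\xi_i\rangle| < \eps$ for all $a\in\Omega$, using $\pi_Z = (\pi_V\otimes\pi_U)\circ\sigma\hat\Delta_u$ and a Cauchy--Schwarz estimate for the perturbation coming from the second tensor leg. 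This shows $V \prec Z$ in the sense of Definition~\ref{D-prec}, and then Lemma~\ref{L-prec} (both $V$ and $Z$ act on separable infinite-dimensional spaces) gives precisely that $V$ is in the closure of the set of $\kappa\in\Rep(\G,\cH)$ with $\kappa\cong Z$, i.e.\ of the unitary-conjugacy orbit of $Z$.

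The main obstacle, I expect, is the weak mixing argument — specifically making rigorous the statement that the modular one-parameter group on $L^2(\Rb)$ has no finite-dimensional invariant subspaces. One clean way: if $K$ were a finite-dimensional subspace invariant under $\id_\cH\otimes(\int^\oplus_\Rb\delta^{it}\,dt)$ for all $t$, then picking any nonzero $\zeta\in K$ and looking at its $L^2(\Rb)$-component, one sees that the functions $t\mapsto\delta^{it}$ acting on the (finite-dimensional) span would have to generate a relatively compact subgroup of unitaries, forcing the spectral subspaces of the generator to be nontrivial atoms; but $\delta$ affiliated to $L^\infty(\G)$ with $\G$ nonunimodular has spectrum strictly larger than $\{1\}$ in the relevant sense, and the representation $t\mapsto\delta^{it}$ on $L^2(\Rb)$ has purely Lebesgue-absolutely-continuous spectral type. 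Alternatively, and perhaps more in the spirit of the surrounding text, one invokes that a weakly mixing tensor factor forces weak mixing of the product: here $U$ as an $\Rb$-representation is weakly mixing (its matrix coefficients $t\mapsto\xi_1(t)\overline{\xi_2(t)}$ decay in the Cesàro sense), and tensoring a weakly mixing representation with an arbitrary one preserves weak mixing — this is exactly the principle cited for the group case, and for $\Rb$ it is elementary. I would need to check that this $\Rb$-level weak mixing transfers to $\G$-level weak mixing of $Z$, which follows since any finite-dimensional $\G$-subrepresentation of $Z$ restricts to a finite-dimensional $\Rb$-subrepresentation, and the latter is excluded.
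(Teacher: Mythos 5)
Your treatment of the second assertion is essentially sound and is a legitimate variant of the paper's argument: where you deduce $V\prec Z$ from the almost invariant vectors of Lemma~\ref{L-nonunimodular} and then invoke Lemma~\ref{L-prec} (applied to the unitization of $C_0^{\univ}(\widehat{\G})$, using the identification of point-strict convergence with point-strong convergence of the unital extensions from Section~\ref{S-wm repr}), the paper instead constructs the conjugating unitaries $u\colon\cH\otimes L^2(\Rb)\to\cH$ explicitly and carries out the same absorption estimate in the Hilbert module $C_0(\G)\otimes\cH$ with the operators $\oV,\oU,\oZ$. Either route works, and the Cauchy--Schwarz estimate you defer is exactly the estimate the paper writes out.

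The weak mixing half, however, has a genuine gap, and it sits precisely at your transfer step. From $(1\otimes P)Z=Z(1\otimes P)$ with $Z=V_{12}U_{13}$ you cannot conclude that $1\otimes P$ commutes with $U_{13}$, and there is no ``restriction of $Z$ to the $\Rb$-factor'': viewed as an element of $L^\infty(\Rb)\,\overbar{\otimes}\,L^\infty(\G)\,\overbar{\otimes}\,\cB(\cH)$, $Z$ is the function $t\mapsto V(\delta^{it}\otimes 1_\cH)$, which is not a one-parameter group of unitaries unless $V$ commutes with $\delta^{it}\otimes 1$. So the claim that a finite-dimensional $\G$-subrepresentation of $Z$ gives a finite-dimensional invariant subspace of some one-parameter unitary group is unfounded; what you are implicitly assuming is that weak mixing passes from $U$ to $V\odot U$, and that is exactly what is not available here: the only quantum-group result of this type used in the paper (Theorem~3.11 of \cite{Vis16}) requires trivial scaling group, whereas the present lemma must cover arbitrary nonunimodular $\G$, and the ``elementary fact for $\Rb$'' does not apply because $V$ is not a representation of $\Rb$. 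The spectral picture is also off: as a representation of $\Rb$, $U$ is the group $t\mapsto\delta^{it}$ acting on $L^2(\G)$, not on $L^2(\Rb)$, and its spectral type is that of $\log\delta$, which can perfectly well have atoms (for a discrete nonunimodular quantum group $\delta$ has point spectrum). What nonunimodularity actually supplies, and what the paper exploits, is unboundedness rather than atomlessness: $\Delta(\delta)=\delta\otimes\delta$ forces $e^{nc}\in\spec\delta$ for every $n$, and the paper then argues directly that for an arbitrary finite-dimensional subspace $\cE\subseteq\cH\otimes L^2(\Rb)$, pairing $Z$ against an approximate eigenvector $\eta$ of $\delta$ with large eigenvalue $e^{s}$ turns the coefficients (\ref{E-inner product tensor}) into integrals $\int f(t)e^{ist}\,dt$ of fixed step functions, which are small by the Riemann--Lebesgue lemma, so that $Z(\eta\otimes\zeta_1)\notin L^2(\G)\otimes\cE$ and $\cE$ carries no subrepresentation. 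Some direct argument of this kind, applied to $Z$ itself rather than to $U$ alone, is what your proposal is missing.
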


\begin{proof}
Since $\G$ is not unimodular, there is a real number 
$c\neq 0$ such that $e^c$ belongs to the spectrum of the modular element $\delta$.
The equation $\Delta (\delta ) = \delta\otimes\delta$ then implies, via elementary spectral theory, 
that $e^{nc}$ belongs to the spectrum of $\delta$ for every $n\in\Nb$.

Let $\cE$ be a nonzero finite-dimensional subspace of $L^2 (\Rb )\otimes\cH$.
Choose an orthonormal basis $\{ \zeta_1 , \dots , \zeta_K \}$ for $\cE$.
Let $\eps > 0$. Then there are a $b>0$ and an $N\in\Nb$ such that
for each $k=1,\dots , K$ we can find a $\zeta_k' \in L^2 ([-b,b] , \cH )\cong L^2 ([-b,b])\otimes\cH$
which is an $\cH$-valued step function on $[-b,b]$ taking at most $N$ different values  
and satisfying $\| \zeta_k' - \zeta_k \| < \eps /3$.
Set $M = \max_{k=1,\dots ,K} \max_{t\in [-b,b]} \| \zeta_k' (t) \|$.
By the proof of the Riemann--Lebesgue lemma for step functions, there is 
an $n\in\Nb$ depending only on $N$ and $M$ such that, setting $s = nc$, every
step function $f : [-b,b] \to\Cb$ which takes at most $N^2$ values
and is bounded in modulus by $M^2$ satisfies
\begin{align}\label{E-RL}
\bigg| \int_{[-b,b]} f(t) e^{ist} \, dt \bigg| < \frac{\eps}{6} 
\end{align}
Write $\cK$ for the range of the spectral projection of $\delta$ corresponding to $[0,2e^s]$.
Since $e^s$ belongs to the spectrum of $\delta$, we can find a norm-one vector  
$\eta\in\cK$ such that $\| \delta\eta - e^s \eta \|$ is small enough so that 
by the continuous functional calculus, applied to $\delta$ acting (boundedly) on $\cK$,
we have $\| \delta^{it} \eta - e^{ist} \eta \| < \eps /(6M^2)$ for all $t\in [-b,b]$. 
For every norm-one vector $\theta\in L^2 (\G )$ and
$k=1,\dots ,n$ the function $t\mapsto \langle \eta \otimes\zeta_1' (t) , V^* (\theta\otimes \zeta_k' (t))\rangle$
on $[-b,b]$ is a step function which takes at most $N^2$ values and is bounded in modulus by $M^2$,
and so using the formula (\ref{E-inner product tensor}) and applying (\ref{E-RL}) we obtain
\begin{align*}
|\langle Z(\eta \otimes\zeta_1' ), \theta \otimes \zeta_k' \rangle |
&= \bigg| \int_{[-b,b]} \langle \delta^{it} \eta \otimes\zeta_1' (t) , V^* (\theta\otimes \zeta_k' (t)) \rangle \, dt\bigg| \\
&\leq \bigg| \int_{[-b,b]} \langle e^{ist} \eta \otimes\zeta_1' (t) , V^* (\theta\otimes \zeta_k' (t)) \rangle \, dt\bigg| \\
&\hspace*{12mm} \ + \bigg| \int_{[-b,b]} 
\langle \delta^{it} \eta - e^{ist} \eta \otimes\zeta_1' (t) , V^* (\theta\otimes \zeta_k' (t)) \rangle \, dt\bigg| \\
&\leq \bigg| \int_{[-b,b]} \langle \eta \otimes\zeta_1' (t) , V^* (\theta\otimes \zeta_k' (t)) \rangle e^{ist} \, dt\bigg| \\
&\hspace*{12mm} \ + 
\sup_{t\in [-b,b]} \big( \| \delta^{it} \eta - e^{ist} \eta \| \| \zeta_1' (t) \| \| \zeta_k' (t)\| \big) \\
&< \frac{\eps}{6} + \frac{\eps}{6M^2} \cdot M^2 \\
&= \frac{\eps}{3} 
\end{align*}
and hence
\begin{align}\label{E-theta}
|\langle Z(\eta \otimes\zeta_1 ), \theta \otimes \zeta_k \rangle |
&\leq |\langle Z(\eta \otimes\zeta_1' ), \theta \otimes \zeta_k' \rangle | 
+ \| \zeta_1 - \zeta_1' \| + \| \zeta_k - \zeta_k' \| \\
&< \frac{\eps}{3} + \frac{\eps}{3} + \frac{\eps}{3} = \eps . \notag
\end{align}

Now given a norm-one vector $\kappa\in L^2 (\G ) \otimes\cE$ we can
write it as $\sum_{k=1}^K \theta_k \otimes \zeta_k$ where $\sum_{k=1}^K \| \theta_k \|^2 = 1$,
and so if we take $\eps = 1/K$ then from (\ref{E-theta}) we get
\begin{align*}
|\langle Z(\eta \otimes\zeta_1 ), \kappa \rangle |
\leq \sum_{k=1}^K |\langle Z(\eta \otimes\zeta_1 ), \theta_k \otimes \zeta_k \rangle | 
< \sum_{k=1}^K \frac1n \| \theta_k \| 
\leq K\eps 
= 1 .
\end{align*}
Since the vector $Z(\eta \otimes\zeta_1 )$ has norm one by the unitarity of $Z$, 
it follows that $Z(\eta \otimes\zeta_1 )\notin \cE$,
which shows that $L^2 (\G )\otimes \cE$ is not $U$-invariant.
We conclude that $U$ has no nonzero finite-dimensional subrepresentations.

Suppose now that $\cH$ is infinite-dimensional and let us show that
the closure of the set of unitary conjugates of $Z$ in $\Rep (\G ,\cH )$ contains $V$.
Fix an orthonormal basis $\{ \zeta_k \}_{k=1}^\infty$ of $\cH$.
Let $\Omega$ be a finite set of norm-one elements in $C_0 (\G )$.
Let $K\in\Nb$ and $\eps > 0$.
Working in the Hilbert module $C_0 (\G ) \otimes\cH$,
for each $k=1,\dots ,K$ there exist
$x_{k,1} , \dots , x_{k,L_k} , y_{k,1} , \dots , y_{k,L_k} \in\cH$
such that $\| \oV (a\otimes\zeta_k ) - \sum_{l=1}^{L_k} x_{k,l} \otimes \zeta_l \| < \eps /3$
and $\| \oV^* (a\otimes\zeta_k ) - \sum_{l=1}^{L_k} y_{k,l} \otimes \zeta_l \| < \eps /3$
for all $a\in\Omega$.
Set $L = \max \{ k,L_1 , \dots , L_K \}$.
Using the characterization of having almost invariant vectors 
given in Proposition~3.7(v) of \cite{DawFimSkaWhi16}, 
Lemma~\ref{L-nonunimodular} yields a unit vector $\xi\in\cH$ such that
$\| \oU (a\otimes \xi ) - a\otimes\xi \| < \eps /3$ for all $a\in\Omega$.
Since $\cH$ is separable and infinite-dimensional we can find a unitary
isomorphism $u : \cH \otimes L^2 (\Rb ) \to\cH$ which sends $\zeta_k \otimes\xi$
to $\zeta_k$ for every $k=1,\dots , L$.
Then for every $a\in\Omega$ and $k=1,\dots , K$ we have
\begin{align*}
\lefteqn{\| ((\id\otimes u)\oZ (\id\otimes u)^{-1} - \oV )(a\otimes\zeta_k ) \|}\hspace*{20mm} \\
\hspace*{20mm} &\leq \| (\id\otimes u)\oV_{12} (\oU_{13} (a \otimes\zeta_k \otimes\xi ) - a\otimes\zeta_k \otimes\xi ) \| \\
&\hspace*{10mm} \ + \bigg\| (\id\otimes u)\bigg(\oV_{12} (a \otimes\zeta_k \otimes\xi )
- \sum_{l=1}^{L_k} x_{k,l} \otimes \zeta_l \otimes \xi \bigg) \bigg\| \\
&\hspace*{10mm} \ + \bigg\| \sum_{l=1}^{L_k} x_{k,l} \otimes \zeta_l - \oV (a\otimes\zeta_k ) \bigg\| \\
&< \frac{\eps}{3} + \frac{\eps}{3} + \frac{\eps}{3} = \eps 
\end{align*}
and similarly $\| ((\id\otimes u)\oZ^* (\id\otimes u)^{-1} - \oV^* )(a\otimes\zeta_k ) \| < \eps$.
We conclude that $V$ belongs to the closure of the set of unitary conjugates of $Z$ in $\Rep (\G ,\cH )$.
\end{proof}

We can now conclude with the main result of the section.

\begin{theorem}\label{T-BV nonunimodular}
Let $\G$ be a second countable nonunimodular locally compact quantum group. 
Then there exists a a weakly mixing unitary representation of $\G$ which has almost invariant vectors.
Moreover, if $\cH$ is a separable infinite-dimensional Hilbert space 
then $\WM (\G ,\cH )$ is a dense $G_\delta$ in $\Rep (\G ,\cH )$.
\end{theorem}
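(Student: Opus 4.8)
The plan is to bootstrap everything from the two lemmas already established in this section, namely Lemma~\ref{L-nonunimodular} (the representation $U$ coming from $\delta^{it}$ has almost invariant vectors) and Lemma~\ref{L-nonunimodular tensor} (for any $V$ on a separable Hilbert space, $Z = V\odot U$ is weakly mixing, and when the underlying space is infinite-dimensional the unitary conjugates of $Z$ accumulate at $V$). First I would dispose of the existence statement: apply Lemma~\ref{L-nonunimodular tensor} with $V = 1_\G$ the trivial representation, tensored up to infinite multiplicity if necessary so that it acts on a separable infinite-dimensional space; then $Z = V\odot U$ is weakly mixing, and since $U$ has almost invariant vectors and tensoring a representation with almost invariant vectors preserves this property (exactly as noted in Section~\ref{S-wm actions} for the self-conjugacy argument), $Z$ has almost invariant vectors. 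That already gives the first sentence, and incidentally reproves that $\G$ cannot have property~(T).

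For the second, topological statement, fix $\cH$ separable infinite-dimensional. That $\WM(\G,\cH)$ is a $G_\delta$ is immediate from Proposition~\ref{P-WM G delta}, so the entire content is density. Let $V \in \Rep(\G,\cH)$ be arbitrary. Applying Lemma~\ref{L-nonunimodular tensor} to $V$, the representation $Z = V\odot U$ is weakly mixing and acts on $\cH\otimes L^2(\Rb)$, which is again separable infinite-dimensional; moreover the closure of the set of unitary conjugates of $Z$ inside $\Rep(\G,\cH)$ contains $V$. Each such unitary conjugate is, by construction, equivalent to $Z$ and hence weakly mixing (weak mixing is manifestly invariant under unitary equivalence, being defined via subrepresentations). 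Therefore $V$ lies in the closure of $\WM(\G,\cH)$. Since $V$ was arbitrary, $\WM(\G,\cH)$ is dense, and combined with the $G_\delta$ property we conclude it is a dense $G_\delta$.

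The only point requiring a word of care is matching Hilbert spaces: Lemma~\ref{L-nonunimodular tensor} produces conjugates of $Z$ living in $\Rep(\G,\cH)$ precisely because $\cH$ is infinite-dimensional and separable, so $\cH\otimes L^2(\Rb)\cong\cH$ via a unitary that can be chosen to behave well on the relevant finite-dimensional pieces — but this is exactly the content of the second half of that lemma's proof, so nothing new is needed. I do not anticipate any real obstacle here; the section has been organized so that Theorem~\ref{T-BV nonunimodular} is essentially a formal consequence of Lemmas~\ref{L-nonunimodular} and~\ref{L-nonunimodular tensor} together with Proposition~\ref{P-WM G delta}, and the proof is a short assembly of these ingredients.
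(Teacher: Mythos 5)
Your proof is correct and follows essentially the same route as the paper: the paper also takes $V$ trivial in Lemma~\ref{L-nonunimodular tensor} (so that the weakly mixing representation with almost invariant vectors is just $U$ itself, via Lemma~\ref{L-nonunimodular}) and gets the dense $G_\delta$ statement from Lemma~\ref{L-G delta} combined with the second half of Lemma~\ref{L-nonunimodular tensor}, exactly as you do. The only cosmetic caveat is your phrase that tensoring with a representation having almost invariant vectors preserves that property, which is false for a general other factor but harmless here since your other factor is trivial, making $Z$ simply (a multiple of) $U$.
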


\begin{proof}
The representation $U$ has almost invariant vectors by Lemma~\ref{L-nonunimodular},
and it is weakly mixing by Lemma~\ref{L-nonunimodular tensor}, as we can take $V$ there
to be the trivial representation.

Finally, if $\cH$ is a separable infinite-dimensional Hilbert space then 
$\WM (\G ,\cH )$ is a dense $G_\delta$ in $\Rep (\G ,\cH )$
by Lemmas~\ref{L-G delta} and \ref{L-nonunimodular tensor}. 
\end{proof}


\begin{thebibliography}{999}




\bibitem{Ara15}
Y. Arano. Unitary spherical representations of Drinfeld doubles.
To appear in {\it J. Reine Angew.\ Math.}

\bibitem{Ara17}
Y. Arano. Comparison of unitary duals of Drinfeld doubles and complex semisimple Lie groups. 
{\it Comm. Math. Phys.} {\bf 351} (2017), 1137--1147. 

\bibitem{BedConTus05}
E. B\'{e}dos, R. Conti, and L. Tuset.
On amenability and co-amenability of algebraic quantum groups and their corepresentations. 
{\it Canad.\ J. Math.}\ {\bf 57} (2005), 17--60. 

\bibitem{BekVal93} 
M. E. B. Bekka and A. Valette. Kazhdan's property (T) and amenable
representations. {\it Math.\ Z.} {\bf 212} (1993), 293--299.

\bibitem{BekHarVal08}
B. Bekka, P. de la Harpe, and A. Valette. {\it Kazhdan's Property (T).}
New Mathematical Monographs, 11. Cambridge University Press, Cambridge, 2008.

\bibitem{BraDawSam13}
M. Brannan, M. Daws, and E. Samei.
Completely bounded representations of convolution algebras of locally compact quantum groups.
{\it M\"{u}nster J. Math.}\ {\bf 6} (2013), 445--482. 

\bibitem{CheNg15}
X. Chen and C.-K. Ng.
Property T for locally compact groups. {\it Intl.\ J. Math.} {\bf 26} (2015), 1550024, 13 pp.

\bibitem{ConWei80}
A. Connes and B. Weiss.
Property T and asymptotically invariant sequences.
{\it Israel J. Math.}\ {\bf 37} (1980), 209--210. 

\bibitem{DasDaw16}
B. Das and M. Daws.
Quantum Eberlein compactifications and invariant means. 
{\it Indiana Univ.\ Math.\ J.} {\bf 65} (2016), 307--352. 

\bibitem{DawFimSkaWhi16}
M. Daws, P. Fima, A. Skalski, S. White.
The Haagerup property for locally compact quantum groups. 
{\it J. Reine Angew.\ Math.} {\bf 711} (2016), 189--229. 

\bibitem{DawSkaVis16}
M. Daws, A. Skalsi, and A. Viselter. Around property (T) for quantum groups. arXiv:1605.02800v1.

\bibitem{Dix77}
J. Dixmier. {\it C$^*$-Algebras.} 
Translated from the French by Francis Jellett. 
North-Holland Mathematical Library, Vol.\ 15. 
North-Holland Publishing Co., Amsterdam-New York-Oxford, 1977.

\bibitem{Fim10}
P. Fima.
Kazhdan's property T for discrete quantum groups. 
{\it Internat.\ J. Math.} {\bf 21} (2010), 47--65. 

\bibitem{GlaWei97}
E. Glasner and B. Weiss. 
Kazhdan's property T and the geometry of the collection of invariant measures. 
{\it Geom.\ Funct.\ Anal.} {\bf 7} (1997), 917--935.

\bibitem{JoiPet92}
M. Joi\c{t}a and S. Petrescu.
Property (T) for Kac algebras.
{\it Rev.\ Roumaine Math.\ Pures Appl.}\ {\bf 37} (1992), 163--178. 

\bibitem{Jol05}
P. Jolissaint.
On property (T) for pairs of topological groups.
{\it Enseign.\ Math.\ (2)} {\bf 51} (2005), 31--45. 


\bibitem{Kec95}
A. S. Kechris. {\it Classical Descriptive Set Theory.} 
Graduate Texts in Mathematics, 156. Springer-Verlag, New York, 1995.

\bibitem{Kec10}
A. S. Kechris.
{\it Global Aspects of Ergodic Group Actions.}
Mathematical Surveys and Monographs, 160. American Mathematical Society, Providence, RI, 2010. 

\bibitem{KerLi16}
D. Kerr and H. Li. {\it Ergodic Theory: Independence and Dichotomies}.
Springer, 2016.

\bibitem{KerPic08}
D. Kerr and M. Pichot.
Asymptotic Abelianness, weak mixing, and property T. 
{\it J. Reine Angew.\ Math.} {\bf 623} (2008), 213--235. 

\bibitem{Kus01}
J. Kustermans.
Locally compact quantum groups in the universal setting. 
{\it Internat.\ J. Math.}\ {\bf 12} (2001), 289--338. 

\bibitem{KusVae00}
J. Kustermans and S. Vaes.
Locally compact quantum groups. 
{\it Ann.\ Sci.\ {\'E}cole Norm.\ Sup.\ (4)} {\bf 33} (2000), 837--934. 

\bibitem{KusVae03}
J. Kustermans and S. Vaes.
Locally compact quantum groups in the von Neumann algebraic setting.
{\it Math.\ Scand.}\ {\bf 92} (2003), 68--92. 

\bibitem{KyeSol12}
D. Kyed and P. Soltan. 
Property (T) and exotic quantum group norms. 
{\it J. Noncommut.\ Geom.} {\bf 6} (2012), 773--800. 

\bibitem{Lan95}
E. C. Lance.
{\it Hilbert C$^*$-Modules. A Toolkit for Operator Algebraists}. 
London Mathematical Society Lecture Note Series, 210. 
Cambridge University Press, Cambridge, 1995.

\bibitem{NesYam16}
S. Neshveyev and M. Yamashita.
Drinfeld center and representation theory for monoidal categories. 
{\it Comm.\ Math.\ Phys.}\ {\bf 345} (2016), 385--434. 

\bibitem{PetPop05}
J. Peterson and S. Popa.
On the notion of relative property (T) for inclusions of von Neumann algebras. 
{\it J. Funct.\ Anal.} {\bf 219} (2005), 469--483. 

\bibitem{Pop07}
S. Popa. Deformation and rigidity for group actions and von Neumann algebras. 
In: {\it International Congress of Mathematicians. Vol. I.}, 445--477. 
Eur.\ Math.\ Soc., Z\"{u}rich, 2007. 

\bibitem{PopVae15}
S. Popa, and S. Vaes.
Representation theory for subfactors, $\lambda$-lattices and C$^*$-tensor categories. 
{\it Comm.\ Math.\ Phys.}\ {\bf 340} (2015), 1239--1280. 

\bibitem{Vae01}
S. Vaes. Locally compact quantum groups. Thesis, Catholic University of Leuven, 2001.

\bibitem{Vae05}
S. Vaes.
Strictly outer actions of groups and quantum groups. 
{\it J. Reine Angew.\ Math.}\ {\bf 578} (2005), 147--184. 

\bibitem{Vis16}
A. Viselter. Weak mixing for locally compact quantum groups. 
To appear in {\it Ergodic Theory Dynam.\ Systems.}

\bibitem{Wan75}
P. S. Wang.
On isolated points in the dual spaces of locally compact groups. 
{\it Math.\ Ann.} {\bf 218} (1975), 19--34. 

\bibitem{Zim84}
R. J. Zimmer.
{\it Ergodic Theory and Semisimple Groups.}
Monographs in Mathematics, 81. Birkh\"{a}user Verlag, Basel, 1984. 

\end{thebibliography}
\end{document}